\newtheorem{thm}{Theorem}
\newtheorem{lemma}{Lemma}[section]
\newtheorem{cor}[lemma]{Corollary}
\newtheorem{definition}[lemma]{Definition}
\makeatletter \@addtoreset{equation}{section} \makeatother
\newcommand{\beq}{\begin{equation}}
\newcommand{\eeq}{\end{equation}}
\def\TS{\textstyle}
\def\ol#1{\overline{#1}}
\def\com#1{\quad{\textrm{#1}}\quad}
\def\eq#1{(\ref{#1})}
\def\nn{\nonumber}
\def\({\left(\begin{array}{cccccc}}
\def\){\end{array}\right)}
\def\bes{\begin{eqnarray}}
\def\ees{\end{eqnarray}}
\def\half{{\TS{\frac12}}}
\def\x#1{$#1\times#1$}
\begin{document}

\title{Shock-free Solutions of the Compressible Euler
  Equations}

\author{Geng Chen\thanks{Department of Mathematics,
Pennsylvania State University, University Park, State College, PA,
16802 ({\tt chen@math.psu.edu}).} \and Robin Young\thanks{Department
of Mathematics and Statistics, University of Massachusetts, Amherst,
MA 01003 ({\tt young@math.umass.edu}).  Supported in part by NSF
Applied Mathematics Grant Number DMS-0908190.}}
%\date{}
\maketitle

\begin{abstract}
We study the structure of shock-free solutions of the compressible
Euler equations with large data.  We describe conditions under which
the Rarefactive/Compressive character of solutions changes, and
conditions under which the vacuum is formed asymptotically.  We
present several new examples of shock-free solutions, which
demonstrate a large variety of behaviors.
\end{abstract}

\textit{2010 Mathematical Subject Classification:} 35L65, 76N15,
35L67.

\textit{Key Words:} Conservation laws, compressible Euler
equations, singularity formation, large data, exact solutions.

\section{Introduction}

We consider hyperbolic systems of conservation laws in one space
dimension,
\beq
  u_t+f(u)_x=0, \quad 
  u\in\mathbb{R}^n,\quad 
  f:\mathbb{R}^n\to\mathbb{R}^n.
\label{conservation laws}
\eeq
It is well known that, due to the absence of dissipative effects,
classical ($C^1$) solutions cannot be sustained, and generically,
gradients blow up in finite time.  This is a physical effect which is
manifested by the development of shock waves, at which the conserved
variable $u$ becomes discontinuous.  Once a shock wave forms, one must
study weak solutions, and the analysis becomes much more difficult.

Breakdown of classical solutions for scalar equations is classical,
going back to Bethe and Hopf~\cite{JohnsonCheret}, and was resolved
for \x2 systems by Lax~\cite{lax2,lax3}.  These results state that in
the presence of genuine nonlinearity, nontrivial small data leads to
gradient blowup in finite time.  For larger systems, similar results
are available, again provided the initial data is small~\cite{Fritz
  John,Li daqian,Liu1}.  For \x2 systems, a pair of Riccati-type
equations of the form $\dot w = w^2$ which blow up in finite time can
be derived.  In~\cite{Fritz John}, John derives an analogous system of
equations for the gradient variables, also with quadratic
inhomogeneous part.  These quadratic terms represent interactions
between different nonlinear fields, including self-interaction terms.
For small data, after an initial period of nonlinear wave interaction,
the solution is essentially decoupled into $n$ waves, each propagating
with its own wavespeed.  Each of these waves can be approximately
treated as scalar, and so breaks down in finite time.  When the data
is large, some results are available for particular
systems~\cite{G3,gthesis,G5}, but in general the breakdown of
solutions with large data remains an open problem.  For results in
higher dimensional settings, see~\cite{pansmoller,Rammaha,sideris}.

In this paper, we study the \x3 system of Euler equations of gas
dynamics, which has one linearly degenerate field.  The equations,
representing conservation of mass, momentum and energy, respectively,
are
\begin{align}
  \rho_t +(\rho\,u)_{x'} &= 0, \nn\\
  (\rho\,u)_t + (\rho\,u^2 + p)_{x'} &= 0, \label{euler}\\ \nn
  (\half\,\rho\,u^2 + \rho\,e)_t + (\half\,\rho\,u^3 + u\,p)_{x'} &= 0.
\end{align}
We use a Lagrangian frame, co-moving with the fluid, given by $x =
\int\rho\,dx'$.  The equations become
\begin{align}
  \tau_t-u_x=0,\nn\\
  u_t+p_x=0,\label{lag}\\
  (\half\,u^2+e)_t+(u\,p)_x=0, \nn
\end{align}
and these are equivalent to \eq{euler}~\cite{courant,wagner}.  Here
$\tau=1/\rho$ is the specific volume, $p$ is pressure, $u$ is fluid
velocity, and $e$ is the specific internal energy.  By the Second Law
of Thermodynamics, for classical solutions, the third (energy)
equation can be replaced by the entropy equation,
\beq
  S_t=0,\label{s con}
\eeq
see~\cite{courant}.  The system is closed by specifying a constitutive
law; for convenience, we consider a polytropic ideal $\gamma$-law gas.
For classical solutions, in regions of constant entropy, the first two
equations of \eq{lag} close, forming the $p$-system of
isentropic gas dynamics~\cite{smoller}.

In~\cite{G3}, the first author independently derived a set of Riccati
type equations for the gradients of sound waves, used these to give a
consistent definition of the rarefactive and compressive character
($R/C$ character) of the nonlinear sound waves, and gave conditions
which guarantee shock formation, for data of arbitrary size.  By \eq{s
  con}, the entropy is a linearly degenerate contact field, stationary
for $C^1$ solutions.  These equations are analogous to those of
~\cite{Fritz John,linliuyang}, and can be generalized to other physical
systems~\cite{G5}.  

The presence of a stationary entropy profile of moderate strength
means that nonlinear interactions between fields occur on the same
scale as self-interactions, and can lead to surprising
behavior~\cite{young blake 1}.  In particular, a sound wave can change
its $R/C$ character across a contact discontinuity (entropy jump).
In~\cite{G3,G5}, the entropy field is assumed to be continuous, and
conditions which guarantee gradient blowup are given.  In this paper,
we study the growth of gradients of shock-free solutions with a
varying entropy profile that can include jump discontinuities, and we
consider data having large amplitudes.  In particular, we will
demonstrate the consistency of results of~\cite{G3} for $C^2$ entropy
profiles and~\cite{young blake 1} for contact discontinuities.

For our calculations, it is convenient to consider
{\bf\emph{shock-free solutions}}, in which the velocity $u$ and
pressure $p$ are $C^2$, while the entropy $S(x)$ is $C^2$ except at
finitely many points, and one-sided limits of $S$ and $S_x$ exist
everywhere.  If the initial data are shock-free, the solution remains
shock-free until the first derivatives of $u$ and $p$ blow
up~\cite{majda,Dafermos}.  The following theorem generalizes results
of~\cite{young blake 1,G3}.

\begin{thm}
If the entropy is non-decreasing, a forward $R$ (resp.~$C$) can change
its character only if it crosses backward $C$ waves (resp.~$R$ waves);
a backward $C$ (resp.~$R$) can change only if it crosses a forward $C$
(resp.~$R$).  Symmetric results hold if $m(x)$ is non-increasing,
provided the character of the opposite simple wave is reversed.
\end{thm}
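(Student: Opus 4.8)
The plan is to reduce the statement to the sign structure of the Riccati-type equations for the sound-wave gradients: the ones derived in~\cite{G3} on the smooth part of the solution, together with their jump relations across contacts as analyzed in~\cite{young blake 1}. Write $\partial_\pm=\partial_t\pm c\,\partial_x$ for the two characteristic derivatives, $c$ the Lagrangian sound speed, and let $y$ and $q$ be the renormalized gradient variables of the forward and backward sound waves; in the isentropic case these are positive multiples of the spatial derivatives of the Riemann invariants, and by the consistent $R/C$ definition of~\cite{G3} the forward (resp.\ backward) wave is rarefactive where $y>0$ (resp.\ $q>0$) and compressive where $y<0$ (resp.\ $q<0$). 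Let $m$ denote the monotone function of the entropy appearing in these formulas, so that the entropy being non-decreasing is equivalent to $m_x\ge0$ --- understood as a measure with a nonnegative atom at each contact --- and $m$ being non-increasing to $m_x\le0$. The one input I need from~\cite{G3,young blake 1} is the precise \emph{source} structure:
\beq
  \partial_+ y = Q_+(y) + \kappa(\tau,S)\,m_x\,q , \qquad
  \partial_- q = Q_-(q) + \kappa'(\tau,S)\,m_x\,y ,
\label{riccati-src}
\eeq
where $Q_\pm$ carry the genuinely nonlinear part --- the quadratic self-interaction together with all terms linear in the gradient variable itself --- so that $Q_+(0)=Q_-(0)=0$ and there is \emph{no} term surviving at $y=q=0$ (entropy variation alone cannot change the $R/C$ character); with the sign conventions of~\cite{G3} one has $\kappa>0$ and $\kappa'<0$; and across a contact the jump relations for $y$ and $q$, forced by the continuity of $u_x$ and $p_x$ (which holds since $u,p\in C^2$) together with the $S$-dependence of the coefficients, reduce --- when the relevant gradient vanishes on one side --- to $[y]=\tilde\kappa\,[m]\,q$ and $[q]=\tilde\kappa'\,[m]\,y$ with $\tilde\kappa>0$ and $\tilde\kappa'<0$.

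Granting \eqref{riccati-src}, the statement is an invariant-region argument along a single characteristic. Consider a forward characteristic that is rarefactive at some point, so $y>0$ there, and suppose it never crosses a backward compressive wave, i.e.\ $q\ge0$ at every point of the characteristic, including at each contact it meets. On the boundary $\{y=0\}$ the right-hand side of the $y$-equation equals $\kappa\,m_x\,q\ge0$, and at a contact reached with $y=0$ the jump is $[y]=\tilde\kappa\,[m]\,q\ge0$; since that right-hand side is Lipschitz in $y$ on the bounded range of values attained by $y$ along the shock-free solution, a standard Gronwall-type comparison argument shows that $\{y\ge0\}$ is invariant both along the characteristic and under the contact jumps. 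Hence $y\ge0$ throughout and the wave stays rarefactive, so a forward $R$ can become $C$ only by crossing a backward $C$. The same argument with the signs of $y$ and $q$ reversed gives the other forward statement: a forward compressive characteristic ($y<0$) that crosses no backward rarefactive wave ($q\le0$ everywhere) keeps $y\le0$, again because $\kappa>0$ and $m_x\ge0$.

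The two backward statements follow by running the identical argument along backward characteristics with the $q$-equation of \eqref{riccati-src}; the only difference is that now $\kappa'<0$ and $\tilde\kappa'<0$, so preserving the backward rarefactive character $q\ge0$ requires $m_x\,y\le0$, i.e.\ $y\le0$ throughout --- no forward rarefactive wave is crossed --- while preserving $q\le0$ requires $y\ge0$ throughout, i.e.\ no forward compressive wave is crossed. Thus a backward $C$ can change only by crossing a forward $C$, and a backward $R$ only by crossing a forward $R$, the asymmetric conclusion stated. Finally, replacing $m_x\ge0$ by $m_x\le0$ (the case in which $m$ is non-increasing) reverses every sign inequality above, so in each of the four cases the obstruction switches to the opposite character of the transversal simple wave --- which is the symmetric statement at the end of the theorem.

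The main obstacle is establishing \eqref{riccati-src}: one must carry out the change of variables of~\cite{G3} and the contact analysis of~\cite{young blake 1} to obtain the vanishing of $Q_\pm$ at the origin, the fixed and opposite signs of $\kappa$ and $\kappa'$, and --- the delicate point --- the matching signs $\tilde\kappa>0$, $\tilde\kappa'<0$ of the contact jumps, while checking that all of this persists for shock-free data of arbitrary amplitude with $m$ only piecewise $C^2$ (using throughout that $u,p\in C^2$, so $u_x,p_x$ are continuous across the contacts). Once \eqref{riccati-src} is in hand, the invariant-region argument and the passage to the non-increasing case are routine.
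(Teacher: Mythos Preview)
Your approach is essentially the paper's: reduce to a sign/invariance analysis along a single characteristic using the Riccati system of Lemma~\ref{remark} on the smooth part and Corollary~\ref{TYcor} at contacts. Two small remarks: the actual equations \eqref{rem12} contain a bilinear term $k_1\alpha\beta$ carrying no factor of $m_x$, so your decomposition $\partial_+y=Q_+(y)+\kappa\,m_x\,q$ is not literally correct---though since that cross term vanishes at $y=0$ your invariant-region argument is unaffected; and where you invoke an abstract Gronwall comparison to conclude $\{y\ge0\}$ is invariant, the paper instead carries out an explicit integrating-factor estimate yielding a quantitative lower bound on $\alpha$, using the shock-free hypothesis to bound the resulting integral.
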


We say that a solution is {\bf\emph{eventually noninteracting}} if all
wave interactions end in finite time: that is, for $t$ large enough,
the solution consists of three regions, defined by outgoing backward,
stationary and forward waves respectively.  The forward and backward
waves are either rarefactions or a single shock; however the profiles
of the rarefactions and contact field need not be explicitly known.  A
Riemann solution is the most obvious example of a solution that is
eventually noninteracting, but we will present several other examples.

We define an {\bf\emph{asymptotic vacuum}} as a characteristic with a
vertical asymptote in the $(x,t)$-plane.  Since the characteristic is
determined by the equation $\frac{dx}{dt} = \pm c$, we necessarily
have $\rho\to 0$ along this characteristic.  Hence, along this
characteristic, the $\rho$, $c$ and $p$ all vanish as $t\to\infty$.
Note that the uniqueness theorem for ODEs implies that the vacuum is
not taken on in finite time unless it is present in the initial data,
see~\cite{Young p system,li1}.

According to \cite{young blake 1}, it is possible to carefully choose an
oscillating entropy profile which supports shock-free, space- and
time-periodic solutions to the Euler equations.  Thus we do not expect
to be able to prove definitive gradient blowup results if the entropy
is non-monotonic, so we largely restrict our attention to monotonic
entropy profiles.

\begin{thm}
Assume that the (variation of the) data is compactly supported and
that the entropy is monotone.  Then a globally defined shock-free
solution is either eventually noninteracting or contains an asymptotic
vacuum.
\end{thm}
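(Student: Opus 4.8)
The plan is to argue by contradiction: assume the solution is globally defined and shock-free but is \emph{not} eventually noninteracting, and deduce that it must contain an asymptotic vacuum. By the symmetry clause in Theorem~1 we may assume the entropy is non-decreasing. Since the variation of the data is compactly supported and, in the Lagrangian frame, the contact field is stationary, the entropy variation is confined to a fixed interval $[a,b]$ for all $t\ge 0$; outside $[a,b]$ the solution is, locally, either a constant state or a simple sound wave.

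The next step is to collect the tools. I would recall the Riccati system of~\cite{G3} for the forward and backward gradient variables carried along the two sound characteristics: its inhomogeneous (interaction) terms are supported precisely where the two gradient variables overlap, or where a gradient variable overlaps $[a,b]$. In particular a new backward (resp.\ forward) wave can be created only inside $[a,b]$, and only while a forward (resp.\ backward) wave is present there. Combined with the fact that in the Lagrangian frame forward characteristics move with speed $c>0$, backward characteristics with speed $-c<0$, and contacts are stationary, this gives: any wave created inside $[a,b]$ leaves the strip in finite time and never returns, and a given forward and a given backward characteristic cross at most once (away from vacuum). Finally I would use Theorem~1 to bound the number of $R/C$ character changes of each simple wave: a change requires a crossing with an opposite-character wave of the other family, so once a wave has exited $[a,b]$ and separated from all waves of the other family, its character is fixed forever.

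With this in hand I would describe the asymptotic state. A forward simple wave that has permanently separated from $[a,b]$ and from every backward wave has straight characteristics of constant speed; if it is compressive these converge and the gradient blows up (a shock forms) in finite time, contradicting global shock-free existence, so every persistent simple wave is a rarefaction (or constant). A forward rarefaction spreads as it propagates: if the slowest characteristic speed in it stays bounded below, its trailing edge sweeps past $[a,b]$ and past the current backward-wave support in finite time and the wave escapes to $x=+\infty$; symmetrically backward rarefactions escape to $x=-\infty$. If instead the slowest speed in such a wave decays to $0$, then along its extreme characteristic $\dot x=c\to 0$, so $x(t)$ converges to a finite value and this characteristic has a vertical asymptote — an asymptotic vacuum; the same conclusion holds if any characteristic is trapped in $[a,b]$ for all large $t$, since then $x(t)$ is monotone and bounded. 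Hence, absent an asymptotic vacuum, for $t$ large every forward wave lies in $x>b$ and to the right of every backward wave, every backward wave lies in $x<a$ and to the left of every forward wave, no interaction terms are active, and the solution has settled into the three-region structure: it is eventually noninteracting.

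The crux — and the step I expect to be the main obstacle — is to rule out, in the absence of a vacuum, a persistent feedback loop in which forward and backward waves are reflected off the entropy variation inside $[a,b]$ indefinitely, so that new interactions are created at arbitrarily large times even though no single characteristic is trapped. This is exactly the mechanism behind the time- and space-periodic solutions of~\cite{young blake 1}, and it is here that monotonicity of the entropy must be used in an essential way. The idea would be to combine Theorem~1 — an entropy crossing \emph{by itself} never changes the $R/C$ character of a wave — with the no-blow-up hypothesis: a reflected wave cannot be compressive, since once it exits $[a,b]$ it would be an isolated compressive simple wave and would shock in finite time (unless it borders the vacuum, which again yields an asymptotic vacuum), so every reflected wave is rarefactive and spreads and weakens, forcing the total strength of waves straddling $[a,b]$ after time $T$ to tend to $0$ as $T\to\infty$. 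The remaining work is to upgrade this decay to finite-time termination of the cascade: equivalently, to show that if the cascade never terminates then the times needed to cross $[a,b]$ cannot stay bounded below, which forces $c\to 0$ somewhere in $[a,b]$, hence a trapped characteristic, and hence an asymptotic vacuum.
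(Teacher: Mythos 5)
There is a genuine gap, and it sits exactly where you flagged it: the ``crux'' of ruling out a perpetual reflection cascade inside the entropy interval is not proved, and the sketch you give for it would not work. First, your key premise --- ``every reflected wave is rarefactive, since otherwise it would exit $[a,b]$ as an isolated compressive simple wave and shock'' --- conflates the wave \emph{reflected back into} the varying-entropy region with the wave that eventually \emph{exits} it. The paper's Lemma~\ref{BoundaryRC} shows the opposite of your premise in the relevant configuration: with non-decreasing entropy, a forward rarefaction emerging at the right edge reflects a backward \emph{compression} back into the region (the exiting waves must indeed be rarefactions, but the interior reflections need not be, because a compression can change character by crossing opposite waves before it exits). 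Second, even granting some decay of the strength of the waves straddling $[a,b]$, decay does not imply finite-time termination of the cascade: the paper's own two-contact (nonmonotone) example has waves reflected infinitely often with strengths tending to zero, so the ``upgrade to finite-time termination'' you defer is precisely the theorem, and monotonicity must enter through a sharper mechanism than weakening of reflections.

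The paper's actual argument is a direct contradiction that never needs termination of the cascade. Assuming the solution is shock-free, has no asymptotic vacuum, and keeps interacting: outgoing waves on both sides are rarefactions ($\alpha(x_1+)\ge0$, $\beta(x_0-)\ge0$), while Lemma~\ref{BoundaryRC} gives $\alpha(x_1-)\ge0$ and $\beta(x_1-)\le0$ for large $t$. Along each backward characteristic from $(x_1,t)$ one defines the first point $x_*(t)$ where $\beta$ changes sign; using the Riccati system \eq{rem12} (and Corollary~\ref{TYcor} at a contact) one shows $\alpha\le0$ on this curve. Since interactions persist, there is a time $t_\#$ with $\alpha(x_1-,t_\#)>0$; tracing the forward characteristic back from $(x_1-,t_\#)$ until it meets the curve $x_*(t)$ produces a forward wave that changes from $C$ (or zero) to $R$ while crossing only $\beta\le0$, contradicting Theorem~\ref{global RC}. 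Your proposal correctly assembles the outer dichotomy (trapped or slowing characteristics give an asymptotic vacuum; separated compressions give shocks), but without an argument of this kind --- or some substitute that uses monotonicity quantitatively --- the central claim remains unproved.
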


In other words, in a monotone entropy field, if a solution continues
to interact for arbitrarily long times and does not contain an
asymptotic vacuum (so in particular \eq{vacuum2} fails), then a shock
necessarily forms in finite time.

In some cases, we can predict the appearance of an asymptotic vacuum
by a condition on the initial data: an asymptotic vacuum can occur in
the solution only if the {\bf\emph{Vacuum Condition}} holds, namely
\beq
   u_0(\infty)-u_0(-\infty) \geq
   m(-\infty)\,z_0(-\infty)+ m(\infty)\,z_0(\infty).
\label{vacuum2}
\eeq
Here $m$ and $z$ are canonical thermodynamic variables which are
nonlinear transformations of the entropy and density, respectively,
given in \eq{cpt}, \eq{taupc} below.  We show that if the
vacuum condition~\eq{vacuum2} holds, then there are no eventually
noninteracting shock-free solutions.  Note that this vacuum condition
is identical to the condition that predicts the existence of the
vacuum in the solution of the Riemann problem~\cite{smoller}, and is
asymptotically equivalent to the condition for an embedded vacuum in
general initial data~\cite{Young p system}.

We then provide several examples of shock-free solutions.  In
particular, if the entropy profile is piecewise constant and
increasing and the data is rarefactive, we show that the boundary of
the interaction region is necessarily a forward characteristic.

We also analyze a case in which the entropy is nonmonotonic.  The
entropy consists of two contacts of equal strength.  By restricting
data to be between the contacts, we show that shock-free solutions are
everywhere rarefactive.  Moreover, in this case we analyze the
long-time behavior of the solution.

\begin{thm}
There are three possible long-time behaviors: asymptotic vacuum;
infinitely reflected waves which converge to the vacuum state; and
infinitely reflected waves with non-vanishing wavespeed and density.
The long-time behavior is determined explicitly by a bifurcation
parameter $\zeta$ determined by the initial data.
\end{thm}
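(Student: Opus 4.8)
The plan is to reduce the long-time dynamics to a one-dimensional iteration and to read off the three behaviors from its fixed-point structure.

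\emph{Reduction to a ping-pong in a fixed strip.} In the Lagrangian frame the two contacts lie on the fixed vertical lines $x=a$ and $x=b$, the entropy is piecewise constant with equal outer values (the equal-strength hypothesis), and, since the data are compactly supported between the contacts, the two outer regions are initially constant states. Hence any wave transmitted through a contact is a simple wave that runs off to infinity and never returns. By the result established just above --- shock-free solutions in this configuration are everywhere rarefactive --- together with the standing no-shock hypothesis, the solution inside the middle strip $a<x<b$ is, after the initial transient interaction, an infinite alternating sequence of forward and backward rarefaction waves; each one, on reaching a contact, splits into a transmitted rarefaction (lost to the exterior) and a reflected rarefaction of the opposite family, and the intervening rarefaction--rarefaction interactions in the middle produce no shock (again by the rarefactive character result). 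Using the canonical variables of \eq{cpt}, \eq{taupc}, record at the $n$-th reflection the pressure $p_n$ (equivalently $c_n$ or $z_n$) of the middle region just behind the bounce and the strength $\sigma_n$ of the bouncing wave.

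\emph{The iteration and the parameter $\zeta$.} One then computes the interaction of a simple rarefaction with a contact. Since $u$ and $p$ are continuous across a contact while $z$ and hence $c$ jump by the fixed ratio set by the entropy jump, the reflected wave has strength $|\kappa|\,\sigma_n$ for an explicit coefficient $\kappa$ with $|\kappa|<1$, determined by the entropy jump and $\gamma$, and the middle pressure changes by an increment depending smoothly on $(p_n,\sigma_n)$. The equal-strength symmetry makes the reflections at $x=a$ and $x=b$ identical, so a full round trip is a fixed smooth map $(p_n,\sigma_n)\mapsto(p_{n+1},\sigma_{n+1})$; after an appropriate change of variable (identifying which variable linearizes the step is part of the work) it reduces to a scalar affine, resp.\ M\"obius, recursion whose coefficients are explicit in $\gamma$ and the entropy jump. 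The bifurcation parameter $\zeta$ is the invariant of this recursion --- a ratio measuring the initial rarefactive strength $\sigma_0$ against the headroom between the initial middle state and the vacuum --- so that the whole orbit, and hence the long-time behavior, depends on $\zeta$ alone.

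\emph{Trichotomy.} Rarefactions drive the middle state monotonically toward the vacuum, so $p_n$ is monotone and converges to some $p_\infty\ge 0$. If $p_\infty>0$, the strengths $\sigma_n$ and the time gaps between successive reflections (a bounded function of $p_n$) stay bounded below, the round trips continue for all time with cumulative time diverging, and one is in the case of infinitely many reflections with non-vanishing wavespeed and density. If $p_\infty=0$, then $c_n\to 0$, so the crossing time of the strip blows up; by the ODE uniqueness argument recalled before \eq{vacuum2}, the vacuum is not attained in finite time, so the reflection times satisfy $t_n\to\infty$. The two vacuum cases are then separated according to whether a genuine rarefaction fan survives into the limit: if it does, it opens up and furnishes a characteristic with $dx/dt=\pm c_n\to 0$, an asymptotic vacuum; if instead the reflected strengths are themselves driven to zero, the solution is infinitely many reflections converging to the vacuum state. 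The three alternatives correspond to $\zeta$ below, equal to, or above the explicit critical value(s) at which $p_\infty$ changes from positive to zero, computed from the fixed-point condition of the recursion.

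\emph{Main obstacle.} The delicate points are, first, the exact rarefaction--contact interaction law: one must show the interaction is clean --- exactly one reflected rarefaction, no spurious compression, a single-valued middle state --- and extract $\kappa$ and the pressure increment explicitly, for which the Riccati/gradient equations and the rarefactive character theorem control the interaction diamond. Second, the passage to the vacuum limit, where the $(u,p)$ (equivalently $(m,z)$) coordinates degenerate: there one must track $p_n$, $c_n$, and the time gaps simultaneously to decide which of the two vacuum scenarios occurs and to exclude a finite-time vacuum, and the bookkeeping of cumulative time is exactly what distinguishes an asymptotic vacuum from waves that merely converge to the vacuum state.
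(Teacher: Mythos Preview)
Your overall strategy---reduce the dynamics to an iteration at the contacts and classify its orbits---is exactly the paper's, but the paper's execution is sharper and your separation of the two vacuum cases does not work as written.

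The paper traces a single characteristic from the corner $(x_1,0)$, recording $z_n$ at the $n$-th boundary hit. Because both the simple-wave relations \eq{simple} and the contact jump \eq{z_q_rel} are \emph{linear} in $(u,z)$ for fixed $m$, one obtains an honest constant-coefficient linear second-order recursion $(1+Q)\,z_{n+1}=2Q\,z_n+(1-Q)\,z_{n-1}$, solved in closed form as $z_n=\zeta/(1-\eta)+\eta^n\,(z_0-z_1)/(1-\eta)$ with $\eta=(Q-1)/(Q+1)$ and $\zeta=z_1-\eta z_0$. No change of variables, no M\"obius map: working in $z$ rather than $p$ gives exact linearity for free, and the trichotomy is read off from the sign of $\zeta$.

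Your criterion for splitting the two vacuum behaviors---``a genuine rarefaction fan survives'' versus ``reflected strengths are driven to zero''---does not separate them. The increments $z_n-z_{n+1}=(z_0-z_1)\,\eta^n$ decay geometrically in \emph{every} case, including $\zeta>0$ where there is no vacuum at all, so vanishing reflected strength is not distinguishing. The actual mechanism for the asymptotic vacuum ($\zeta<0$) is that the closed-form gives $z_N<0$ for some finite $N$; this is unphysical, so the $N$-th reflection never occurs---the characteristic develops a vertical asymptote inside the strip before reaching the boundary. Your argument implicitly assumes the iteration $(p_n,\sigma_n)$ is defined for all $n$ and then analyzes its limit; the asymptotic-vacuum case is precisely when the iteration terminates at finite $n$, and you must detect that possibility rather than reason about an infinite orbit. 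The borderline $\zeta=0$ is then the case where all reflections occur but $z_n=\eta^n z_0\to 0$.
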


In particular, we obtain interacting solutions which asymptotically
approach vacuum at the specific rate
\[
  s-r = O(1)\,(1+t)^{-(\gamma-1)/(\gamma+1)}, \com{or}
  \tau = O(1)\,(1+t)^{2/(1+\gamma)},
\]
where $s$ and $r$ are the Riemann invariants.  As a consequence, we
note that the presence of an asymptotic vacuum is a stronger condition
than development of vacuum as $t\to\infty$, and in particular,
\eq{vacuum2} can hold even though no asymptotic vacuum is present.

We contrast the cases of monotonic and nonmonotoic entropies: when the
entropy is monotonic, shock-free solutions must be eventually
noninteracting or contain an asymptotic vacuum, while if the entropy
is nonmonotonic, waves can be reflected between the contacts
infinitely often.

Finally, we briefly discuss solutions containing a single shock.  We
treat this as a free boundary problem with specific conditions on
either side of the shock, and discuss global solutions of this type.

\setcounter{thm}{0}
\section{Equations and Wave Curves}

We restrict our attention to a polytropic ideal
gas, with equation of state
\[
  e=c_v T={\frac{p\,\tau}{\gamma-1}} \com{and} p\,\tau=R\,T,
\]
so that
\beq
  p=Ke^{S/c_v}\tau^{-\gamma}.\label{poftau}
\eeq 
Here $S$ is the entropy, $T$ is the temperature, $R$, $K$, $c_v$ are
positive constants, and  $\gamma>1$ is the adiabatic gas constant,
c.f.~\cite{courant}.  The Lagrangian sound speed is given by
\[
	c=\sqrt{-p_\tau}
         =\sqrt{K\gamma}\,{\tau}^{-(\gamma+1)/2}\,e^{S/2c_v}.
\]

We use the coordinates of \cite{young blake 1}: that is, we define
variables $m$ and $z$ so that
\beq
  c = mz^d, \quad p = \int mc\;dz \com{and} \tau = - \int\frac mc\;dz,
\label{cpt}
\eeq
for some $d$.  In fact, it suffices to take $d=\frac{\gamma+1}{\gamma-1}$
and set
\[
	m=C_me^{{S}/{2c_v}} \com{and}
	z=C_z\tau^{-(\gamma-1)/2},
\]
with constants
\[
  C_z = (d-1)^{\frac1{1-d}}  \com{and}
  C_m = \sqrt{K\gamma}C_z^{-d},
\]
and it is easy to check that \eq{cpt} becomes
\beq
	c = m\, z^d, \quad
	p = \frac{m^2\,z^{1+d}}{d+1},  \com{and}
	\tau=\frac{z^{1-d}}{d-1}.
\label{taupc}
\eeq
We shall continue to refer to $m$ as the entropy variable.

In these coordinates, for $C^1$ solutions,
(\ref{lag}) are equivalent to
\begin{align}
	z_t+\frac{c}{m}u_x&=0,\nn\\
	u_t+mcz_x+2\frac{p}{m}m_x&=0,\label{lagrangian zm}\\
	m_t&=0\nn,
\end{align}
the last equation being (\ref{s con}) replacing the energy equation,
valid for smooth solutions.

\subsection{Hugoniot curves}

We describe the shock waves (and contact discontinuties) by the 
Rankine-Hugoniot conditions,
\begin{align}
        \xi \,[\tau] &=-[u], \nn\\
        \xi \;[u] &= [p], \label{RH}\\
        \xi \;[\TS{\frac{1}{2}}\,u^2+e] &= [u\,p], \nn
\end{align}
where $\xi$ is the shock speed, and the brackets denote the change
across the discontinuity, as usual.  Using the identity
$[a\,b] = \ol a\,[b] + [a]\,\ol b$, where $\ol q = (q_0+q_1)/2$, the
third equation of \eq{RH} simplifies to 
\beq
  \xi \;([e] + \ol p\,[\tau]) = 0.
\label{ept}
\eeq

Shock waves correspond to solutions of \eq{RH} with $\xi\ne0$: to
describe these fully, we solve \eq{ept} and use \eq{RH} to determine
$\xi$ and $[u]$.  Using
\[
   e = \frac{p\,\tau}{\gamma-1} \com{and} \gamma = \frac{d+1}{d-1},
\]
equation \eq{ept} becomes
\[
  (d+1)\,\ol p\,[\tau] + (d-1)\,[p]\,\ol\tau = 0.
\]
We now use \eq{taupc} and solve to get
\[
  \left(\frac{m_1}{m_0}\right)^2\;Z^{d+1} 
     = \frac{d\,Z^{d-1}-1}{d-Z^{d-1}}, \com{where}
   Z = \frac{z_1}{z_0},
\]
which we write as
\beq 
	\frac{m_1}{m_0}=f(\frac{z_1}{z_0}),
\label{PWmf}
\eeq 
having defined
\[
  f(Z) :=\sqrt{\frac{Z^{d-1} -\frac{1}{d}}{Z^{d+1}-\frac{1}{d}Z^{2d}}},
   \com{for}
   {d^{\frac{1}{1-d}}}<Z<d^{\frac{1}{d-1}}.
\] 
We now use \eq{RH} to describe $u$ and $\xi$, namely
\[
  [u] = \pm\sqrt{[p]\,[-\tau]} \com{and} \xi = \pm\sqrt{[p]/[-\tau]}.
\]
Substituting and simplifying as above, we obtain
\beq
  u_1-u_0=\pm m_0\,z_0\,g(Z) \com{and}
  \xi =\pm m_0\,z_0^d\,h(Z),
\label{PWgh}
\eeq
where $g$ and $h$ are respectively defined by
\begin{align*}
  g(Z) &:= \frac{1}{\sqrt{d^2-1}}\,
     \sqrt{(f^2(Z)\,Z^{1+d}-1)\,(1-Z^{1-d})}, \\
  h(Z) &:= \sqrt{\frac{d-1}{d+1}}\,
     \sqrt{\frac{f^2(Z)\,Z^{1+d}-1}{1-Z^{1-d}}}.
\end{align*}

Here we have labelled the states on opposite sides of the shock with
the subscripts $0$ and $1$, without explicitly referring to the left
and right states, and these can be interchanged in this description.
We choose the signs in \eq{PWgh} and the value of $Z$ by referring to
Lax's entropy condition: that is, we require that the (absolute)
wavespeed be larger behind the shock, which implies that $Z>1$ if
$z_1$ is behind the shock, and $Z<1$ if $z_1$ is ahead of the shock;
if $z_1$ is the right state, say, we require $Z>1$ for a backward
shock and $Z<1$ for a forward shock.  In this way we can describe both
forward and backward shocks with similar equations, as in~\cite{young
  Global interaction}.  For a further analysis of the structure of the
wave curves and the functions $f$, $g$ and $h$, see~\cite{gthesis}.

\subsection{Stationary Solutions}

The contact discontinuities provide another class of weak solutions,
being solutions of \eq{RH} with wavespeed $\xi=0$.  It is clear that
contacts should satisfy $[u] = [p] = 0$, while the entropy variable
$m$ can vary: in this case, by \eq{taupc} we have
\beq
  m_0^2\,z_0^{1+d} = m_1^2\,z_1^{1+d},
\label{contact}
\eeq
which in turn determines the jump in $z$.  It is well-known that the
entropy is linearly degenerate and these waves are contacts which are
stationary in a Lagrangian frame~\cite{smoller}.  More generally, we
can obtain stationary waves by allowing $m$ (or $z$) to vary while
fixing $u$ and $p$ as constants: this is easily seen by direct
substitution into equations \eq{lag}.  That is, any time-independent
states $(z,u,m)$ given in some region by
\beq
   m = m(x), \quad u = U, \quad  m(x)^2\;z(x)^{1+d} = P,
\label{station}
\eeq
with $U$ and $P$ constants, form a stationary wave solution to \eq{lag}.

\subsection{Isentropic Flow}

A simpler \x2 system, known as isentropic flow, is obtained when the
entropy $S$ (or $m$) is taken to be identically constant, and the
third (energy) equation of \eq{lag} is dropped, to give
\begin{align}
  \tau_t-u_x&=0,\nn\\\label{psys}
  u_t+p_x&=0,
\end{align}
with $p = p(\tau)$, also known as the $p$-system.

The $p$-system is considerably simpler than the full Euler system
because the equations weakly decouple.  Indeed,
for $C^1$ solutions, (\ref{lagrangian zm}) becomes
\begin{align}
  z_t+\frac{c}{m}u_x&=0,\nn\\
  u_t+m\,c\,z_x&=0,\label{p zm}
\end{align}
so that the Riemann invariants, given by
\beq
  r=u-mz\com{and}
  s=u+mz,\label{rs def}
\eeq
respectively, satisfy
\beq
  r^{\backprime}=0 \com{and}
  s^\prime=0.
\label{rs invariant}
\eeq
Here ${}^\backprime$ and $'$ denote differentiation along backward and
forward characteristics, respectively,
\beq
  {}^{\backprime}=\partial_t-c\partial_x  \com{and}
  {}^\prime=\partial_t+c\partial_x.
\label{bkfwd}
\eeq

\section{$R/C$ Character of Solutions}

We briefly recall results from the authors' previous papers
\cite{young blake 1,G3} describing the local rarefactive and
compressive nature of solutions.   

\paragraph{In a constant entropy field}
For isentropic flow~\eq{psys}, the Riemann invariants $r$ and $s$ are
constant along characteristics, and simple waves are described by 
\beq
  m_r=m_l,\quad
  u_r-u_l=m_l(z_a-z_b),
\label{simple}
\eeq
where the subscripts $l,\ r,\ a,\ b$ denote the states to the left,
right, ahead of and behind the wave, respectively.  Noninteracting
simple waves are classified as rarefactive or compressive according to
whether the characteristics diverge or converge, respectively.  For
isentropic flow, this is determined by the profile of the Riemann 
invariants.  Since in the full system, entropy is stationary before
shock formation, the following results extend immediately to
isentropic regions in full \x3 flows:

\begin{definition}
\cite{young blake 1}
\label{def RC}
In a constant entropy field, the local {$R/C$} character of a $C^1$ 
solution is:
\[\left.\begin{array}{lll}
    \text{Forward} & $R$ & \text{iff} \quad s_t< 0,\\
    \text{Forward} & $C$ & \text{iff} \quad s_t> 0,\\
    \text{Backward} & $R$ & \text{iff} \quad r_t> 0,\\
    \text{Backward}&$C$ &\text{iff} \quad r_t< 0.
\end{array}\right.\]
\end{definition}

\begin{lemma}
\label{RCpsystem}
\cite{young Global interaction}
In a constant entropy field, if an interacting solution is $C^2$, the
$R/C$ character of each wave is preserved along characteristics.
\end{lemma}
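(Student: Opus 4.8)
The plan is to follow the transverse gradient of each Riemann invariant along the characteristics that carry it and to observe that, in a constant entropy field, this gradient obeys a \emph{homogeneous} ODE along those characteristics, so it can never change sign. First I would record the needed facts: since $m$ is constant, by \eq{taupc} and \eq{rs def} the sound speed $c = m z^{d}$ is a smooth function of $(r,s)$ alone, $c = m\bigl((s-r)/(2m)\bigr)^{d}$, so that $c_x = c_r\,r_x + c_s\,s_x$; and by \eq{rs invariant}--\eq{bkfwd} (equivalently \eq{p zm}) the invariants are transported, $s_t + c\,s_x = 0$ along forward characteristics and $r_t - c\,r_x = 0$ along backward ones.

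Next I would differentiate the transport equation for $s$ in $x$. Because the solution is $C^2$, $s_x$ is $C^1$ and $s_{tx} = s_{xt}$, so $\partial_x(s_t + c\,s_x) = 0$ rearranges to
\[
  (s_x)' \;:=\; \partial_t(s_x) + c\,\partial_x(s_x) \;=\; -\,c_x\,s_x \;=\; -\bigl(c_r\,r_x + c_s\,s_x\bigr)\,s_x
\]
along each forward characteristic, and the identical computation for $r$ gives $(r_x)^{\backprime} = c_x\,r_x$ along each backward characteristic. The right-hand sides vanish whenever $s_x$, resp.\ $r_x$, does.

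Then I would conclude. Restricting $c_x$ to a forward characteristic and regarding it as a known continuous coefficient $b(t)$ (this is where $C^2$ is used, so that $c_x$ is continuous along the $C^1$ curve), the first identity is the linear homogeneous equation $(s_x)' = -b(t)\,s_x$, whence $s_x(t) = s_x(t_0)\exp\!\bigl(-\int_{t_0}^{t} b\bigr)$ keeps a fixed sign, or is identically zero, along the whole characteristic; the same holds for $r_x$ along backward characteristics. Since away from vacuum $c > 0$, the transport equations give $s_t = -c\,s_x$ and $r_t = c\,r_x$, so the signs of $s_t$ and $r_t$ are likewise constant along forward, resp.\ backward, characteristics; by Definition~\ref{def RC} these signs are precisely the forward and backward $R/C$ labels, which proves the lemma.

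I expect the crux to be the observation that the evolution equation for the transverse gradient is homogeneous in that gradient: this is exactly what forbids the character from flipping, since a flip would force $s_t$ — hence $s_x$ — through zero, which a linear homogeneous ODE rules out by uniqueness. No nondegeneracy, monotonicity, or smallness of the data is used; ``interacting'' merely signals that we do not restrict to a simple wave (in a simple forward wave $r_x \equiv 0$ and the identity reduces to the pure Riccati $(s_x)' = -c_s\,s_x^2$, still homogeneous), and the only regularity genuinely needed is $C^2$, so that $c_x$ serves as a bona fide continuous coefficient along the characteristic.
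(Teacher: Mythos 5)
Your argument is correct: differentiating the transport equations $s_t+c\,s_x=0$ and $r_t-c\,r_x=0$ in $x$ (legitimate under the $C^2$ hypothesis, which gives $s_{xt}=s_{tx}$ and a continuous $c_x$ along the $C^1$ characteristics) yields $(s_x)'=-c_x\,s_x$ and $(r_x)^{\backprime}=c_x\,r_x$, and the homogeneity of these linear equations forbids a sign change, which with $c>0$ (away from vacuum) and Definition~\ref{def RC} is exactly the statement. The paper itself offers no proof, citing \cite{young Global interaction}, but your route is the same idea underlying its own machinery: with constant entropy, $k_2=0$ in \eq{k def}, so the Riccati equations \eq{rem12} reduce to $\alpha^\prime=k_1\,\alpha\,(\beta-\alpha)$ and $\beta^{\backprime}=k_1\,\beta\,(\alpha-\beta)$, again homogeneous in the relevant gradient, and the integrated version is Lax's formula \eq{lax lemma}; the only difference is that you work with $s_x,r_x$ and absorb the opposite-family gradient into a known continuous coefficient (turning the quadratic equation into a linear homogeneous one), which is harmless precisely because the lemma assumes a given globally $C^2$ solution rather than constructing one.
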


\begin{lemma}
\label{blowup_psystem}
\cite{lax2,young com} Assume the initial data $r_0(x)$ and $s_0(x)$ of
$r$ and $s$ are $C^2$, and the initial entropy is constant. If $-s_t$
or $r_t$ is negative somewhere in the initial data, then $|u_x|$
and/or $|p_x|$ blow up in finite time.
\end{lemma}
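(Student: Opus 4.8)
The plan is to follow Lax's method for $2\times 2$ systems \cite{lax2}: derive a scalar Riccati equation for a suitably rescaled gradient along each characteristic family, and read the finite-time catastrophe off the sign of the data. Since $m$ is constant, differentiating the transport law $s_t+c\,s_x=0$ in $x$ gives $(s_x)'=-c_x\,s_x$ along forward characteristics; with $c=m\,z^d$ and $z=(s-r)/2m$ one has $c_x=\tfrac d2 z^{d-1}(s_x-r_x)$, so the right-hand side splits into a self-interaction term $\propto s_x^{2}$ and a cross-interaction term $\propto r_x s_x$. The cross term is killed by the integrating factor $\sqrt c$: setting $\tilde a:=\sqrt c\,s_x$ and using $z'=-z^{d}r_x$ (which follows from \eqref{p zm} and \eqref{rs def}), a short computation shows the cross terms cancel and
\[
  \tilde a' = -\,a\,\tilde a^{2}, \qquad
  a:=\frac{d}{2\sqrt m}\,z^{(d-2)/2}>0 ,
\]
the positivity of $a$ expressing genuine nonlinearity ($c_z>0$). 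By the symmetry of \eqref{p zm}, $\tilde b:=\sqrt c\,r_x$ satisfies $\tilde b^{\backprime}=-a\,\tilde b^{2}$ along backward characteristics.

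Next I would match the hypothesis to these quantities. Since $s_t=-c\,s_x$ and $r_t=c\,r_x$, ``$-s_t<0$ somewhere'' is the same as ``$\tilde a<0$ somewhere'' and ``$r_t<0$ somewhere'' the same as ``$\tilde b<0$ somewhere''; by the evident symmetry between the two families I treat the first case and fix the forward characteristic through the offending point. Along it $\frac{d}{dt}(1/\tilde a)=a>0$ with $1/\tilde a<0$ initially, so $1/\tilde a$ increases monotonically, and there are only two possibilities: either $\tilde a\to-\infty$ at the finite time $t^\ast$ determined by $\int_0^{t^\ast}a\,dt=1/|\tilde a_0|$, or $\int_0^\infty a\,dt<1/|\tilde a_0|$ and $\tilde a$ tends to a finite negative limit. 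In the first case $c$ is finite at $t^\ast$ (the vacuum is not reached in finite time), so $s_x=\tilde a/\sqrt c$ blows up; and since $u_x=\tfrac12(r_x+s_x)$ and $p_x=\tfrac12\,c\,(s_x-r_x)$, blow-up of $s_x$ forces that of $|u_x|$ or of $|p_x|$ (a cancellation in $u_x$ is a reinforcement in $p_x$). Thus the lemma reduces to excluding the second alternative, i.e.\ to proving $\int_0^\infty a\,dt=\infty$ along the characteristic.

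I expect this last step to be the main obstacle. Because $r$ and $s$ are constant along characteristics of a $C^1$ solution, $\|r\|_\infty$ and $\|s\|_\infty$ are conserved, so $z=(s-r)/2m$ is bounded above and the state stays in a fixed bounded subset of the non-vacuum region throughout the interval of existence; and since the vacuum is not attained in finite time (uniqueness for the characteristic ODEs, cf.~\cite{Young p system,li1}), on every compact time interval $z$ is bounded below away from $0$ as well. Hence $a$ is bounded below by a positive constant on such intervals, which already yields $\int_0^\infty a\,dt=\infty$ unless the characteristic escapes toward the vacuum ($z\to0$, $a\to0$) as $t\to\infty$. Excluding that escape is the crux: the wave is compressive to begin with and, by Lemma~\ref{RCpsystem}, stays compressive for all time, which should be incompatible with spreading to vacuum — and making this incompatibility quantitative, by combining the persistence of the $R/C$ character with the a priori bounds on $r$ and $s$, is where the real work lies. (For data of compact variation it is immediate, since interaction then ceases in finite time and the wave becomes a compressive simple wave, for which $a$ is bounded below.) Once the second alternative is ruled out, the blow-up occurs at the finite time $t^\ast$, and the argument applied to $\tilde b$ along a backward characteristic handles the case $r_t<0$.
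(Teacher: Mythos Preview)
The paper does not supply its own proof of this lemma: it is stated with citations to \cite{lax2,young com}, and the sentence immediately following it says that Lax's argument in \cite{lax2} relies on an \emph{a priori} assumption that the solution stays away from vacuum, while the unpublished note \cite{young com} removes that assumption. Your proposal is an accurate reconstruction of Lax's argument --- the Riccati equation $\tilde a'=-a\,\tilde a^2$ with $a=\tfrac{d}{2\sqrt m}\,z^{(d-2)/2}$ is correct, as is the reduction of blow-up to the divergence of $\int_0^\infty a\,dt$ along the characteristic --- and you have put your finger on exactly the same gap the paper flags: the possibility that the characteristic drifts toward vacuum so that $z\to 0$ and the integral converges. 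You are candid that closing this gap is ``where the real work lies'' and do not actually close it; but neither does the paper, which simply defers to \cite{young com}. So your sketch is on the same footing as the paper's treatment: it reproduces \cite{lax2} and names the missing ingredient, without supplying it. The heuristic you offer (persistent compressive character should be incompatible with spreading to vacuum) points in the right direction, and your parenthetical remark that the compactly-supported case is easy because interaction ends in finite time is correct and already covers the applications made later in the paper.
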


In \cite{lax2}, Lemma~\ref{blowup_psystem} relies on an \emph{a
  priori} assumption that the solution stays away from vacuum; this
assumption is removed in \cite{young com}.  Thus gradients will blow
up (shocks will form) in finite time if and only if there are
compressive waves in the data; see also \cite{li1}.

\paragraph{At a contact discontinuity\label{subsection_contact}}

Simple waves preserve their character in isentropic regions, as the
(derivatives of) Riemann invariants propagate with the wave.  However,
waves may change type when crossing a contact discontinuity which
separates different isentropic regions.  According to \eq{contact}, 
the change in variables is
\beq
  u_r=u_l \com{and}
  m_r\,z_r = m_l\,z_l\,Q,
\label{z_q_rel}
\eeq
where we have set
\beq
  Q = \big(\frac{m_r}{m_l}\big)^{\frac{d-1}{d+1}},
\com{so also}
      \frac{z_r}{z_l}= Q^{\frac{-2}{d-1}}.
\label{q def}
\eeq
with corresponding changes in the derivatives of Riemann invariants
$r_t$ and $s_t$ by \eq{rs def}.  It follows that if forward and
backward simple waves cross the jump simultaneously, then one of the
waves could change character.  Following \cite{gthesis}, we will call
the contact discontinuity a $1$-contact if the entropy decreases,
$m_l>m_r$ (so $Q<1$), and a $3$-contact if $m_l<m_r$ ($Q>1$).

\begin{lemma}
\label{Then RC Young}
\cite{young blake 1}
A nonlinear wave changes its {$R/C$} value at a contact discontinuity
when and only when one of the following inequalities hold:
\[\left.\begin{array}{lll}
   R_{in}^- \rightarrow C_{out}^- &
   \text{iff} & Q m_l \dot{z}_l<\dot{u}_l< m_l \dot{z}_l,\\
   C_{in}^- \rightarrow R_{out}^- &
   \text{iff} &  m_l \dot{z}_l<\dot{u}_l< Q m_l \dot{z}_l,\\
   R_{in}^+ \rightarrow C_{out}^+ &
   \text{iff} & -Q m_l \dot{z}_l<\dot{u}_l< -m_l \dot{z}_l,\\
   C_{in}^+ \rightarrow R_{out}^+ & \text{iff} &  -m_l
   \dot{z}_l<\dot{u}_l< -Q m_l \dot{z}_l,
\end{array}\right.\] 
where $\dot y:=y_t$ denotes the time derivative, the subscripts denote
incoming and outgoing waves (or the side of the jump), and the
superscripts indicate the direction of the wave: $-$ is backward, $+$
is forward.
\end{lemma}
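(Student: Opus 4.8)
The plan is to reduce the statement to two continuity conditions along the contact line together with Definition~\ref{def RC}, applied separately on each side (a contact locally separates two constant-entropy regions). Put the contact at $x=x_0$, with the $l$-state occupying $x<x_0$ and the $r$-state $x>x_0$; a forward wave (speed $+c$, by \eq{bkfwd}) then enters on the left and leaves on the right, while a backward wave enters on the right and leaves on the left. Since $[u]=[p]=0$ on the contact and the solution is $C^1$ up to it from each side, differentiating $u_l(t)=u_r(t)$ and $p_l(t)=p_r(t)$ in $t$ gives $\dot u_l=\dot u_r$ and $\dot p_l=\dot p_r$; write $\dot u$ for the common value of $u_t$. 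Because $m_t=0$, the $R/C$ character is governed by $s_t=\dot u+m\,\dot z$ (forward) and $r_t=\dot u-m\,\dot z$ (backward), so everything comes down to comparing $m_l\dot z_l$ with $m_r\dot z_r$.

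To get that comparison I would use \eq{taupc}: since $p=m^2z^{1+d}/(d+1)$ we have $\dot p=m^2z^d\dot z$, hence $\dot p/p=(d+1)\,\dot z/z$; as $p>0$ and both $p$ and $\dot p$ are continuous across the contact, the ratio $\dot z/z$ is continuous there as well. On the other hand $mz$ jumps across the contact by the factor $Q$, by \eq{z_q_rel}. Writing $m\dot z=(mz)(\dot z/z)$ then gives at once
\[
  m_r\,\dot z_r=Q\,m_l\,\dot z_l,
\]
so along the wave the relevant derivatives of the Riemann invariants on the two sides are $s_t^l=\dot u+m_l\dot z_l$, $s_t^r=\dot u+Q\,m_l\dot z_l$ for a forward wave, and $r_t^l=\dot u-m_l\dot z_l$, $r_t^r=\dot u-Q\,m_l\dot z_l$ for a backward wave.

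It then remains to enumerate. By Definition~\ref{def RC}, a forward wave (with definite character on each side) changes its $R/C$ value precisely when $s_t^l$ and $s_t^r$ have opposite signs, i.e.\ when $\dot u$ lies strictly between $-m_l\dot z_l$ and $-Q\,m_l\dot z_l$; depending on which threshold is the larger the change is $R\to C$ or $C\to R$, and spelling out each case with $\dot u=\dot u_l$ reproduces the two forward ($+$) lines. Likewise a backward wave changes precisely when $r_t^r$ and $r_t^l$ have opposite signs, i.e.\ when $\dot u$ lies strictly between $m_l\dot z_l$ and $Q\,m_l\dot z_l$, which gives the two backward ($-$) lines once one reads off which side is incoming and which is outgoing.

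The computations are all elementary; the one place to be careful is the bookkeeping, namely matching the sign conventions of Definition~\ref{def RC} (forward $R\Leftrightarrow s_t<0$, backward $R\Leftrightarrow r_t>0$, and so on) with the correct incoming/outgoing side for forward versus backward waves in each of the four combinations. A dropped sign here would corrupt the result, so I would cross-check the four lines against the identities $s_t^r-s_t^l=(Q-1)\,m_l\dot z_l=-(r_t^r-r_t^l)$, which also make transparent why no change can occur unless $\dot u$ lands in the appropriate interval between the two thresholds.
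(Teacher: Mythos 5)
Your argument is correct, and all four sign cases check out against Definition~\ref{def RC} with the incoming/outgoing sides assigned as you do (forward: in on the left, out on the right; backward: the reverse). Note, though, that this paper does not prove Lemma~\ref{Then RC Young} at all — it is quoted from \cite{young blake 1} — so there is no in-paper proof to match; your derivation is essentially the standard one that the paper's own machinery suggests. In particular, your key jump relation $m_r\,\dot z_r = Q\,m_l\,\dot z_l$, which you obtain by differentiating $[p]=0$ and using $\dot p/p=(d+1)\,\dot z/z$ together with \eq{z_q_rel}, can be read off even more directly by differentiating the paper's recorded linear map \eq{rsjump} in time (since $Q$ is constant along the stationary contact), giving $\dot r_r=\tfrac{1+Q}2\dot r_l+\tfrac{1-Q}2\dot s_l$ and its companion, i.e.\ exactly your formulas $s_t^r=\dot u+Q\,m_l\dot z_l$, $r_t^r=\dot u-Q\,m_l\dot z_l$. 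The only point worth making explicit in a final write-up is the one you flag yourself: the four inequalities should be spelled out case by case rather than asserted, since the strictness of the inequalities is what excludes the degenerate (zero-strength) boundary cases and makes the ``only when'' direction immediate.
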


Note that the conditions of the lemma are mutually exclusive, so only
one wave can change its character at any time.  Moreover, for a fixed
jump, a change in one wave is possible only if the opposite wave has
the right character.

\begin{cor}
\label{TYcor}
At a $3$-contact ($Q>1$), the backward wave can change from $R$ to
$C$ (resp.~$C$ to $R$) only if both the incoming and outgoing forward
waves are $R$ (resp.~$C$); the forward wave can change from $C$ to $R$
(resp.~$R$ to $C$) only if both backward waves are $R$ (resp.~$C$).
Similar conclusions hold for a $1$-contact, but the character of the
incoming opposite wave changes.
\end{cor}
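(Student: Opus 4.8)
The corollary follows from Lemma~\ref{Then RC Young} by translating its analytic conditions, via Definition~\ref{def RC}, into statements about the $R/C$ character of the opposite wave on each side of the contact. The plan is as follows. First I would set up the dictionary. Since $m_t=0$ on each side of the contact by \eq{lagrangian zm}, and $u$ is $C^2$ so that $\dot u$ is continuous across the jump, \eq{rs def} gives $r_t=\dot u-m\dot z$ and $s_t=\dot u+m\dot z$ on either side, with $\dot u_r=\dot u_l$. Differentiating the contact relation $m_r z_r=Q\,m_l z_l$ of \eq{z_q_rel} in $t$ (note $Q$ is time-independent, being a fixed ratio of stationary entropy values) yields $m_r\dot z_r=Q\,m_l\dot z_l$. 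Writing $a:=\dot u_l$ and $b:=m_l\dot z_l$, the forward wave then has $s_t=a+b$ on the left and $s_t=a+Qb$ on the right, while the backward wave has $r_t=a-b$ on the left and $r_t=a-Qb$ on the right; since a backward characteristic crosses the contact from right to left, its incoming state is the right one and its outgoing state the left one, and for a forward wave it is the reverse.

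With this dictionary, each of the four transitions in Lemma~\ref{Then RC Young} forces the claimed signs. For a $3$-contact ($Q>1$), consider $R_{in}^-\to C_{out}^-$, equivalent by the lemma to $Qb<a<b$. Nonemptiness of this interval forces $(Q-1)b<0$, hence $b<0$; then the bound $a<b$ gives $s_t^{(l)}=a+b<2b<0$ and $s_t^{(r)}=a+Qb<(1+Q)b<0$, so both forward waves are $R$, as claimed. The other three $3$-contact cases are identical in form: nonemptiness of the defining interval fixes the sign of $b$, and then exactly one of the two inequalities bounding $a$ forces a definite sign on the relevant pair $s_t^{(l)},s_t^{(r)}$ (for a forward-wave transition, on $r_t^{(l)},r_t^{(r)}$), the other inequality being uninformative because it bounds the quantity by one of the wrong sign.

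For a $1$-contact ($Q<1$) I would repeat the computation; now nonemptiness of each defining interval forces the opposite sign of $b$, which reverses the signs obtained for the opposite wave — this is the statement that the character of the (incoming) opposite wave changes. Equivalently, a $1$-contact is a $3$-contact read from the other side: relabelling $l\leftrightarrow r$ gives $\tilde Q=(m_l/m_r)^{(d-1)/(d+1)}=1/Q>1$, and one applies the $3$-contact conclusion after this relabelling. The only thing needing care is the bookkeeping of which side is incoming versus outgoing for each family and which of the two bounds on $a$ is the usable one; there is no real analytic difficulty. The fact that at most one wave changes character at a given time is immediate, since the four intervals for $a$ in Lemma~\ref{Then RC Young} are pairwise disjoint.
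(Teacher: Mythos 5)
Your proposal is correct and follows the same route the paper intends: the corollary is read off directly from Lemma~\ref{Then RC Young} (the paper leaves the sign bookkeeping implicit, noting only that the four conditions are mutually exclusive), and your dictionary $s_t=a+b,\ a+Qb$ and $r_t=a-b,\ a-Qb$ on the two sides, with nonemptiness of each interval fixing the sign of $b=m_l\dot z_l$, is exactly that bookkeeping carried out. The case analysis, including the reversal for $Q<1$, checks out.
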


For later use, we record the change of Riemann invariants across the
jump: it follows easily from \eq{rs def}, \eq{z_q_rel} that
\beq
     r_r = \frac{1+Q}2\,r_l+\frac{1-Q}2\,s_l,
\com{and}
     s_r = \frac{1-Q}2\,r_l+\frac{1+Q}2\,s_l.
\label{rsjump}
\eeq

\paragraph{For non-isentropic smooth solutions\label{subsection_shockfree}}

In \cite{G3}, the first author provides an appropriate definition of
the $R/C$ character for the full (non-isentropic) Euler equations.
Recalling \eq{bkfwd}, define the quantities
\begin{align}
 \alpha &:= -{p^{\backprime}}/{c^2}
   =u_x+mz_x+\TS{\frac{2}{d+1}}\,m_x\, z\com{and}\nn\\
\beta &:= -{p^{\prime}}/{c^2}
   =u_x-mz_x-\TS{\frac{2}{d+1}}\,m_x\, z;
\label{def_beta}
\end{align}
these are multiples of derivatives of Riemann invariants.

\begin{definition}
\label{def2}
\cite{G3,G5}
The local {$R/C$} character in a $C^1$ solution is
\[\begin{array}{llll} 
	\text{Forward}& $R$ &\text{iff}& \alpha>0,\\ 
	\text{Forward}& $C$ & \text{iff} & \alpha<0,\\
	\text{Backward}& $R$ & \text{iff}& \beta>0,\\ 
	\text{Backward}& $C$ &\text{iff}&\beta<0.
\end{array}\]
\end{definition}

For $C^1$ solutions, it is easy to show using \eq{taupc},
(\ref{lagrangian zm}) and \eq{rs def}, that
\[
	s_t+c\,\alpha=0,\quad r_t-c\,\beta=0,
\]
so this definition agrees with and extends Definition \ref{def RC}.

The following Riccati type equations describe the growth of gradients:
\begin{lemma}
\label{remark}
\cite{G3}
If the solution of (\ref{lag}) is $C^2$, then
\begin{align}
  \alpha^\prime&=
     k_1\big( k_2(3\alpha+\beta)+\alpha\beta-\alpha^2\big) \com{and}\nn\\
	\beta^\backprime&=k_1{\big(}
        -k_2(\alpha+3\beta)+\alpha\beta-\beta^2\big),\label{rem12} 
\end{align}
where 
\beq
  k_1 :=\frac{\gamma+1}{2(\gamma-1)} z^{\frac{2}{\gamma-1}} \com{and}
  k_2 :=\frac{\gamma-1}{\gamma(\gamma+1)}z m_x.
\label{k def}
\eeq
Moreover,
\beq
  |\alpha|\ \text{or}\ |\beta|\to
  	\infty \com{iff} |u_x|\ {\it or}\ |p_x| \to \infty.
\label{alpha beta blowup}
\eeq
\end{lemma}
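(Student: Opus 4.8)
The plan is to differentiate the $C^1$ form \eqref{lagrangian zm} of the system in $x$ (legitimate since the solution is $C^2$) and substitute into the characteristic derivatives of $\alpha$ and $\beta$. First I would record the constitutive identities coming from \eqref{taupc}: $c/m = z^d$, $p_z = mc$ and $p_m = 2p/m$, hence $z_t = -z^d u_x$, $p_t = -c^2 u_x$, and $p_x = mcz_x + \tfrac{2p}{m}m_x$; together with the inversion of \eqref{def_beta}, namely $u_x = \tfrac12(\alpha+\beta)$ and $mz_x = \tfrac12(\alpha-\beta)-\tfrac{2}{d+1}m_x z$. Since $m_t = 0$, also $m_{xt}=0$, so only the spatial derivatives of $m$ survive.

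Writing $\alpha' = (\partial_t+c\partial_x)\alpha$ with $\alpha = u_x + mz_x + \tfrac{2}{d+1}m_x z$ yields a linear combination of $u_{xt}$, $z_{xt}$, $u_{xx}$, $z_{xx}$, $m_{xx}$ plus first-order terms. Differentiating the first two equations of \eqref{lagrangian zm} in $x$ gives $z_{xt} = -dz^{d-1}z_x u_x - z^d u_{xx}$ and $u_{xt} = -p_{xx}$, the latter expanded by the product rule. The essential point, which I would verify by direct substitution, is that the $u_{xx}$, $z_{xx}$ and $m_{xx}$ contributions cancel identically, leaving a purely first-order, hence quadratic-in-gradient, expression in $u_x$, $z_x$, $m_x$, $z$. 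Rewriting this remainder via the two inversion formulas and the identities $k_1 = \tfrac d2 z^{d-1}$, $k_2 = \tfrac{d-1}{d(d+1)}z m_x$ (both immediate from $d = \tfrac{\gamma+1}{\gamma-1}$ and \eqref{k def}) should collapse it to $\alpha' = k_1\big(k_2(3\alpha+\beta)+\alpha\beta-\alpha^2\big)$. The formula for $\beta^\backprime$ follows by the mirror computation along backward characteristics; equivalently it is obtained from the $\alpha$ equation under the substitution $\alpha\leftrightarrow\beta$, ${}^\prime\leftrightarrow{}^\backprime$, $k_2\to -k_2$, the sign flip reflecting the sign of the $m_x z$ term in the definition of $\beta$.

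For \eqref{alpha beta blowup}, I would note the clean consequences $u_x = \tfrac12(\alpha+\beta)$ and $p_x = \tfrac c2(\alpha-\beta)$ --- the latter because the $m_x$-terms in $p_x = mcz_x+\tfrac{2p}{m}m_x$ exactly cancel those concealed in $mz_x$ --- so conversely $\alpha = u_x + p_x/c$ and $\beta = u_x - p_x/c$. As $c = mz^d$ is continuous and bounded above and away from zero wherever the $C^2$ solution persists short of vacuum, $|\alpha|$ or $|\beta|$ blows up precisely when $|u_x|$ or $|p_x|$ does.

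The main obstacle is the bookkeeping of the second step: expanding the mixed and pure second $x$-derivatives, confirming the cancellation of every second-derivative term, and then reassembling the sizeable first-order remainder into the advertised quadratic form with the correct coefficients. There is no conceptual difficulty, but the computation is long and error-prone; organizing it by first eliminating $u_{xx}$, $z_{xx}$, $m_{xx}$, and then collecting the $m_x^2 z^{1+d}$ block, the $m_x z^d(\cdots)$ block and the $z^{d-1}(\cdots)$ block separately --- in particular checking that the terms quadratic in $m_x z$ sum to zero --- keeps it under control.
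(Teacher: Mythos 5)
The paper gives no proof of this lemma (it is quoted from~\cite{G3}), and your plan is precisely the standard derivation and is correct as far as I can check: the translations $k_1=\tfrac d2\,z^{d-1}$ and $k_2=\tfrac{d-1}{d(d+1)}\,z\,m_x$ of \eq{k def} are right, the $u_{xx}$, $z_{xx}$, $m_{xx}$ contributions cancel exactly (the $m_{xx}$ cancellation using $2p/m=2cz/(d+1)$), and the first-order remainder $-dm^2z^{d-1}z_x^2-dmz^{d-1}z_xu_x-\tfrac{3d+1}{d+1}mm_xz^dz_x-\tfrac2{d+1}m_xz^du_x-\tfrac2{d+1}m_x^2z^{1+d}$ matches $k_1\bigl(k_2(3\alpha+\beta)+\alpha\beta-\alpha^2\bigr)$ coefficient by coefficient. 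Your symmetry argument for $\beta^\backprime$ is legitimate (the reflection $x\mapsto-x$, $u\mapsto-u$ preserves \eq{lagrangian zm}, swaps $\alpha\leftrightarrow\beta$ and the two characteristic derivatives, and flips the sign of $zm_x$, hence of $k_2$), and the equivalence \eq{alpha beta blowup} via $u_x=\tfrac12(\alpha+\beta)$, $p_x=\tfrac c2(\alpha-\beta)$ is fine under the implicit standing assumption that $c$ stays bounded above and away from zero (no vacuum) up to the blowup time.
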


We note that these equations are similar to those derived by F.~John
in \cite{Fritz John}, but were independently derived from a different
point of view by the first author in \cite{G3}.  Condition \eq{alpha
  beta blowup} coincides exactly with formation of a shock wave.

\subsection{Global $R/C$ Structure}

In a constant entropy field, the $R/C$ character of waves is preserved, but
in a varying entropy field, it may change.  We describe conditions
under which the $R/C$ character of an interacting wave changes.
Essentially, the only way a wave can change is if it is nonlinearly
superposed with reflections from the interaction of \emph{opposite}
waves with the background entropy field; this is consistent with the
changes across a contact described above.

\begin{thm}
\label{global RC}
Suppose a solution satisfies the Shock-free Condition.  If the
entropy $m(x)$ (i.e.~$S(x)$) is non-decreasing, a forward $R$
(resp.~$C$) can change its character only if it crosses backward $C$
waves (resp.~$R$ waves); a backward $C$ (resp.~$R$) can change only if
it crosses a forward $C$ (resp.~$R$).  Symmetric results hold if
$m(x)$ is non-increasing, provided the character of the opposite
simple wave is reversed.
\end{thm}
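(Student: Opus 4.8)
The plan is to reduce the statement to a local sign-tracking argument along characteristics, using the Riccati system \eq{rem12} together with Definition \ref{def2}. The key observation is that the sign of $k_2$, which by \eq{k def} is proportional to $z\,m_x$ with $z>0$, is determined entirely by the monotonicity of the entropy: if $m(x)$ is non-decreasing then $k_2\ge 0$, and if $m(x)$ is non-increasing then $k_2\le 0$. Since $k_1>0$ always, the inhomogeneous terms $k_1 k_2(3\alpha+\beta)$ and $-k_1 k_2(\alpha+3\beta)$ carry a definite sign relationship to the opposite-field gradient. So first I would fix, say, a forward characteristic along which $\alpha$ evolves, and ask: starting from $\alpha>0$ (forward $R$), what must happen along the way for $\alpha$ to reach $0$?

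Second, I would examine the evolution equation $\alpha' = k_1\big(k_2(3\alpha+\beta) + \alpha\beta - \alpha^2\big)$ precisely at a first crossing time, i.e.\ the first $t_*$ at which $\alpha(t_*)=0$ while $\alpha>0$ on a left-neighborhood. At such a point we need $\alpha'(t_*)\le 0$. Plugging $\alpha=0$ into \eq{rem12} gives $\alpha'(t_*) = k_1 k_2\,\beta(t_*)$. In the non-decreasing-entropy case $k_1\ge 0$ and $k_2\ge 0$, so $\alpha'(t_*)\le 0$ forces $\beta(t_*)\le 0$, i.e.\ the backward wave being crossed must be $C$ (or neutral). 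This is exactly the claimed statement: a forward $R$ can lose its character only while crossing backward $C$ waves. The symmetric case $\alpha<0$ crossing up to $0$ requires $\alpha'(t_*)\ge 0$, hence $\beta(t_*)\ge 0$ there, so a forward $C$ changes only across backward $R$. The backward-wave statements follow identically from $\beta^\backprime = k_1\big(-k_2(\alpha+3\beta)+\alpha\beta-\beta^2\big)$, which at $\beta=0$ reads $\beta^\backprime = -k_1 k_2\,\alpha$; with $k_2\ge 0$ this means a backward $C$ ($\beta<0$, so $\beta^\backprime\ge0$ at the crossing) requires $\alpha\le 0$ there, a forward $C$. When $m(x)$ is non-increasing, $k_2\le 0$ and every inequality reverses, which is precisely the "character of the opposite simple wave is reversed" clause.

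Third, there is a gap to close: the theorem speaks of a wave "crossing" opposite waves, but the local computation above only controls the sign of $\beta$ (resp.\ $\alpha$) \emph{at the instant of the crossing}, along the characteristic carrying $\alpha$. To phrase this as "crosses backward $C$ waves" in the sense of the paper I should invoke the $R/C$ classification of the opposite family: the value $\beta(t_*)<0$ at the point where the forward characteristic meets a backward characteristic means that backward characteristic is carrying a $C$ wave \emph{at that point}, by Definition \ref{def2}. If one wants the stronger reading that the entire backward wave being traversed is of type $C$ — which in a shock-free solution is consistent because gradients have not blown up — one can note that $\beta$ cannot have changed sign on that backward characteristic without itself satisfying the dual condition, giving an inductive/bootstrapping consistency statement; but the clean local version above is what \eq{rem12} delivers directly, and it already matches Corollary \ref{TYcor} and Lemma \ref{Then RC Young} at contacts (where $m_x$ is a positive measure for a $3$-contact, negative for a $1$-contact, so the same sign bookkeeping applies in the distributional sense).

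The main obstacle I anticipate is handling the entropy jumps rigorously: at a contact discontinuity $m_x$ is a Dirac mass, so $k_2$ is not a function and \eq{rem12} must be interpreted via the jump relations \eq{rsjump} instead. I would treat the $C^2$ entropy intervals with the Riccati ODE argument above, treat each jump point using Lemma \ref{Then RC Young} / Corollary \ref{TYcor}, and then check that the two analyses are compatible in sign — which they are, since crossing a $3$-contact ($Q>1$, "$m$ increasing") is the jump analogue of $k_2>0$, and the conclusion "$\alpha$ changes sign only if $\beta$ had the opposite sign on both sides" is literally Corollary \ref{TYcor}. Assembling the smooth pieces and the jump pieces into a single statement valid on all of $(x,t)$ under the shock-free hypothesis is the bookkeeping step that needs care but no new ideas.
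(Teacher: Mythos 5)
Your overall architecture (Riccati system \eq{rem12} with the sign of $k_2$ dictated by the monotonicity of $m$, smooth entropy handled by the ODE along characteristics, jumps handled by Lemma~\ref{Then RC Young}/Corollary~\ref{TYcor}) is the same as the paper's. But the central ODE step has a genuine gap. Evaluating \eq{rem12} at the first zero $t_*$ of $\alpha$ gives $\alpha'(t_*)=k_1k_2\,\beta(t_*)$, and combining with $\alpha'(t_*)\le 0$ yields only $k_2(t_*)\,\beta(t_*)\le 0$. This tells you nothing in the degenerate cases: if $k_2(t_*)=0$ (locally constant entropy, which is the typical situation in this paper, e.g.\ piecewise constant profiles) or if $\beta(t_*)=0$, the inequality is vacuous, so your argument does not exclude $\alpha$ reaching zero, and subsequently going negative, without any backward $C$ ever being crossed. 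Moreover, even where $k_2>0$, your conclusion is only about the single instant $t_*$ and only shows $\alpha$ \emph{touches} zero; a touch with $\alpha'(t_*)=0$ is exactly the situation where first-order sign information cannot decide whether the character actually changes, so the "first crossing" bookkeeping does not close.

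The paper's proof avoids this by proving a strictly stronger barrier statement: assume (for contradiction) that the forward wave crosses no backward $C$, i.e.\ $\beta\ge 0$ along the whole forward characteristic $\Gamma$; then since $k_1,k_2\ge0$ the terms $k_1k_2\beta$ and $k_1\alpha\beta$ are nonnegative while $\alpha\ge0$, so
\[
  \alpha^\prime \ \ge\ 3k_1k_2\,\alpha - k_1\,\alpha^2
  \com{on} \Gamma .
\]
Multiplying by the integrating factor $e^{-\int 3k_1k_2\,dt}$ and integrating the resulting Riccati inequality gives
\[
  \frac1{\tilde\alpha(t)} \le \int_{t_0}^{t}k_+\,dt + \frac1{\tilde\alpha(t_0)},
\]
so $\alpha$ stays strictly positive with an explicit lower bound unless $\int_{t_0}^{t_*}k_+\,dt=+\infty$, which is impossible under the Shock-free Condition ($z$, hence $k_1$ and $k_2$, are bounded along $\Gamma$ on finite time intervals). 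This quantitative comparison is precisely what handles the cases $k_2=0$ and $\beta=0$ that your pointwise evaluation misses; it shows $\alpha$ cannot even reach zero, so no change of character can occur at all without crossing a backward $C$. Your treatment of the contact points and of the continuity points of $m$ then matches the paper's, so the fix is to replace the instant-sign argument by this integrating-factor/comparison argument (or an equivalent ODE uniqueness/barrier argument) on each smooth piece.
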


It follows from the proof that this statement includes changes of type
from zero strength waves to $C$ or $R$.

\begin{proof}
First suppose the entropy is $C^2$, and consider a forward
rarefaction.  We consider the evolution of $\alpha$ along the forward
characteristic, propagating through a field of non-decreasing
entropy.  Also, suppose that $\beta\geq0$ along this characteristic,
so that our forward wave crosses no backward compressions.  Let
$\Gamma$ denote the forward characteristic, parameterized by $t_0\leq t$.

We prove by contradiction that $\alpha(t)>0$ on $\Gamma$.  Suppose
not, and let $t_*$ be the first time for which $\alpha(t)=0$ along
$\Gamma$.  Since $\beta\geq 0$ along $\Gamma$, $k_1$ and $k_2$ are
non-negative, and $\alpha(t)>0$ for $t_0<t<t_*$, by \eq{rem12} we have
\bes
   \alpha^\prime\geq 3 k_1 k_2\alpha-k_1\alpha^2
   \com{for} t_0<t<t_*.
\label{alphaineq}
\ees
Denote 
\[
  {\tilde{\alpha}}(t) :=\alpha(t) e^{-\int_{t_0}^{t}3 k_1 k_2 dt}
\com{and}
  k_+ :=k_1 e^{\int_{t_0}^{t}3 k_1 k_2 dt},
\]
where the integral is along $\Gamma$.  Using the integrating factor
$e^{-\int_{t_0}^{t}3 k_1 k_2 dt}$, \eq{alphaineq} yields
\[
  {\tilde{\alpha}}^{\prime}\geq-k_{+}{\tilde{\alpha}^2}.
\]
Dividing by $\tilde{\alpha}^2$ and integrating along $\Gamma$, we get
\[
  \frac{1}{\tilde{\alpha}(t)}\leq \int_{t_0}^{t}k_{+}
  dt+\frac{1}{\tilde{\alpha}(t_0)}.
\]
Since $\tilde\alpha(t_0)>0$ and $\tilde\alpha(t_*)=0$, we must have
\[
  \lim_{t\to {t_*}^-}{\int_{t_0}^{t}k_{+}
      dt}=+\infty,
\]
which contradicts the Shock-free condition.   We conclude that
$\alpha>0$ on $\Gamma$, with the lower bound
\[
  {{\alpha}(t)}\geq \frac{e^{\int_{t_0}^{t}3k_1 k_2
      dt}{\alpha}(t_0)}{1+{\alpha}(t_0)\int_{t_0}^{t}k_{+}
    dt}.
\]

Now consider a point at which the entropy is not $C^2$, (actually a
contact, since entropy is stationary).  By the Shock-free condition,
one-sided limits of $m$ and $m_x$ exist and $u$ and $p$ are $C^2$, so
our $R/C$ variables $\alpha$ and $\beta$ and characteristic $\Gamma$
are defined up to $x=x_*$ with well-defined one-sided limits.  If
$m(x)$ is continuous at $x_*$, then $\alpha=-\frac{s_t}{c}$ and
$\beta=\frac{r_t}{c}$ are also continuous, so the above argument
yields $\alpha(t_*+)>0$, and we continue the characteristic forward in
time.  If the entropy has a jump at $x_*$, then Corollary~\ref{Then RC
  Young} applies, and the conclusion of the theorem follows.

The proofs of other cases are entirely similar, and omitted.
\end{proof}

\paragraph{Boundary $R/C$ character structure}

Finally we consider the $R/C$ structure at the edge of the support of
the entropy profile.  The main case of interest is that of no incoming
wave and an outgoing rarefaction, corresponding to initial waves being
compactly supported and no shocks outside the support of the entropy,
respectively.  

\begin{lemma}
\label{BoundaryRC}
Suppose that the solution satisfies the Shock-free condition.  A wave
emerging from a region of varying entropy keeps its $R/C$
character as long as there are no incoming waves of the other family.
If $m(x)$ is non-decreasing, a forward $R$ (resp.~$C$) emerging to the
right reflects a backward $C$ (resp.~$R$) back into the region of
varying entropy.  A backward $R$ (resp.~$C$) emerging to the left of
the varying entropy reflects a forward $R$ (resp.~$C$) into the region
of varying entropy.  Similar results hold if the entropy $m(x)$ is
non-increasing, but the character of the reflected wave is reversed.
\end{lemma}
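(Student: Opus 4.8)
The plan is to mimic the proof of Theorem~\ref{global RC} — treating separately the part of the profile where $m$ is $C^2$ and the contacts where it jumps — but with one extra preliminary step: an analysis of the solution in the isentropic region beyond the region of varying entropy, which is what pins down the \emph{character} of the reflected wave. So first I would record that structure. Since the waves are compactly supported and backward characteristics travel leftward (so no backward wave can emerge to the right), the region to the right of the varying entropy is isentropic and contains only forward waves and constant states; there the Riemann invariant $r$ is globally constant, so $r_t\equiv0$ and hence $\beta\equiv0$ along every backward characteristic in that region (symmetrically, $\alpha\equiv0$ in the isentropic region to the left). Granting this, the first assertion is immediate: while a forward $R$ is still inside the varying entropy it keeps its character as long as it meets no backward $C$, by Theorem~\ref{global RC}, and once it crosses into the isentropic region to the right its character is preserved by Lemma~\ref{RCpsystem}; the backward-wave case is symmetric.

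Next I would establish the reflection statement for $m(x)$ non-decreasing, say for a forward $R$ emerging to the right. Fix a backward characteristic $\Gamma^-$ which has already crossed this forward $R$ in the isentropic region and then reaches the right edge of the varying entropy at some time $t_1$; by the previous step $\beta(t_1)=0$ on $\Gamma^-$. Inside the varying entropy $m_x\ge0$, so $k_2\ge0$, and by the first assertion the forward waves that $\Gamma^-$ meets there are still $R$, i.e.~$\alpha>0$. Evaluating the second equation of \eq{rem12} at any point of $\Gamma^-$ where $\beta=0$ gives $\beta^\backprime=-k_1k_2\alpha\le0$, strictly negative wherever $m_x>0$; hence $\beta$ becomes negative as soon as $\Gamma^-$ meets genuinely varying entropy, and it cannot climb back to $0$, since at any later point with $\beta=0$ we would again have $\beta^\backprime\le0$, while on any constant-entropy subinterval the relation $\beta^\backprime=k_1\beta(\alpha-\beta)$ keeps $\beta$ negative. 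Thus $\Gamma^-$ carries a backward $C$ into the region, as claimed. For a backward $R$ emerging to the left one runs the symmetric argument along a forward characteristic entering from the left with $\alpha=0$: the first equation of \eq{rem12} now gives $\alpha'=k_1k_2\beta$ at $\alpha=0$, which has the \emph{same} sign as $\beta$ — the sign asymmetry between the two equations of \eq{rem12} is exactly why left- and right-emerging waves reflect opposite characters — so a backward $R$ reflects a forward $R$. Reversing the sign of $m_x$ reverses all these conclusions, giving the non-increasing case.

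It remains to handle the finitely many points where $m$ is not $C^2$; these are contacts, so one uses \eq{rsjump} together with Corollary~\ref{TYcor} in place of the Riccati equations. At a $3$-contact ($Q>1$, i.e.~increasing entropy) with no incoming backward wave ($r_{r,t}=0$) and an incoming forward $R$ ($s_{l,t}<0$), the relation \eq{rsjump} forces $r_{l,t}=\frac{Q-1}{Q+1}\,s_{l,t}<0$ on the interior side, so the outgoing backward wave is again a $C$, consistently with the $C^2$ computation, while Corollary~\ref{TYcor} guarantees the forward wave keeps its character across the jump. The remaining contact and monotonicity cases are entirely analogous.

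The step I expect to be the main obstacle is the ``stays $C$'' claim: ruling out that the reflected $\beta$ returns to zero requires care precisely where $m_x$ vanishes (the degenerate case $k_2=0$), and one also needs the bound $\alpha>0$ to hold along $\Gamma^-$ throughout — which is clean in the main case of interest (a single emerging rarefaction and no incoming waves, so the first assertion applies directly), but would require more bookkeeping about earlier reflections in a fully general configuration.
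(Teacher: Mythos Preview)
Your proof is correct and takes the same approach as the paper: evaluate \eq{rem12} at $\beta=0$ for the smooth part of the boundary, and use the jump relations at a contact (the paper does the latter in $(u,z)$ via \eq{z_q_rel} rather than in $(r,s)$ via \eq{rsjump}, but the computations are equivalent). The paper's argument is purely local at $x_1$ --- continuity of $\alpha$ gives $\alpha>0$ just inside the boundary, whence $\beta^\backprime<0$ there --- so your concern about whether the reflected $\beta$ stays negative deeper inside the varying entropy, and the attendant need for $\alpha>0$ all along $\Gamma^-$, is unnecessary for the lemma as stated and as used in Theorem~\ref{mainthm}, which only requires $\beta(x_1-,t)\le0$.
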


\begin{proof}
We consider only the first case: a forward rarefaction emerging from
the right edge $x_1$ of the varying entropy, which is non-decreasing.
All other cases are similar.

First suppose there is a jump at $x_1$: by Def.~\ref{def RC} and
\eq{rs def}, the states to the right of the jump satisfy
\[
  \dot r_r = \dot u_r - m_r\,\dot z_r = 0 \com{and}
  \dot s_r = \dot u_r + m_r\,\dot z_r < 0,
\]
so that $\dot z_r<0$, where $\dot y:=y_t$ denotes the time
derivative.  Applying \eq{z_q_rel} and using $Q>1$, we see that
\begin{align*}
  \dot r_l &=\dot u_l - m_l\,\dot z_l = m_r\,\dot z_r\,(1-1/Q) <0
\com{and}\\ 
  \dot s_l &= m_r\,\dot z_r\,(1+1/Q) <0,
\end{align*}
so that the waves on the left are a forward rarefaction and backward
compression.

Now suppose the entropy is smooth ($C^1$) and increasing up to $x_1$,
but constant for $x\ge x_1$.  Then also, for $x\ge x_1$, we have
\[
  \alpha > 0 \com{and} \beta = 0,
\]
since there are no incoming backward waves and the emerging forward
wave is a rarefaction.  By continuity, for $x$ near $x_1$ and $x<x_1$
we have $\alpha>0$, $\beta\approx 0$, and, by \eq{k def}, $k_2>0$.
Thus the forward wave is a rarefaction for $x$ near $x_1$, and
moreover, in this neighborhood,
\[
  \beta^\backprime < 0, \com{so also} \beta <0,
\]
so the backward characteristic reflected back into
the varying entropy region is compressive.
\end{proof}

\section{Shock-free solutions with a single contact}

Our main results refer to interacting solutions which interact only
for finite times or contain asymptotic vacuums.  When there is a
single entropy jump, say at $x=0$, then these are the only
possibilities for shock-free solutions.  By first treating a single
contact discontinuity, we avoid issues of multiple reflections of
waves considered in later sections.

Consider the interaction of smooth isentropic waves at a contact 
discontinuity separating constant entropy values $m_l$ and
$m_r$.  We describe the states on either side of the contact by 
\[
  z_l(t) = z(0-,t), \quad z_r(t) = z(0+,t) \com{and} 
  u_l(t) = u_r(t) = u(0,t).
\]
We find the $R/C$ character of the incoming and outgoing waves by
using (\ref{rs def}) to calculate the Riemann invariants, and
differentiating these in time.

\begin{lemma}
\label{lemma single contact}
Suppose the interacting solution contains no shocks.  The solution is
eventually noninteracting if and only if the Vacuum Condition
\eq{vacuum2} is not satisfied.  If the Vacuum Condition holds, the
solution contains an asymptotic vacuum.
\end{lemma}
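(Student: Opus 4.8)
The plan is to analyze the dynamics of the two outgoing simple waves directly in terms of the Riemann invariants, using the reflection formulas \eq{rsjump} across the single contact at $x=0$. The key observation is that a forward wave on the right is a simple wave carrying its value of $s$ unchanged along forward characteristics, and similarly on the left, while the interaction at the contact exchanges information between the two sides. First I would set up the bookkeeping: let $s_r(t)$ and $r_l(t)$ denote the traces of the Riemann invariants on the right and left of the contact; each incoming wave deposits its profile onto the trace, and each outgoing wave carries a trace value off to $\pm\infty$. Since the data has compact variation, after some finite time $t_1$ the incoming waves from the far field have all arrived, and from then on the only mechanism sustaining interaction is reflection at the contact itself: an outgoing forward wave to the right carries constant $s$, but the trace $r_l$ it leaves behind feeds (via \eq{rsjump}) into the trace $s_r$ the next time around — except that for a single contact there is no ``next time around,'' so in fact once the last incoming characteristic from each side has passed through, the outgoing waves are determined. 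I would make this precise by showing that for $t > t_1$ the solution near $x=0$ consists of exactly two simple waves (one forward, one backward), each of whose profiles is frozen, plus the stationary contact.

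The dichotomy then comes from asking whether these two frozen simple waves can be completed to global simple waves filling $x>0$ and $x<0$, or whether the characteristics in one of them must collapse to a vertical asymptote, i.e.\ drive $z\to 0$ (vacuum). Here I would invoke the structure of simple waves for the $p$-system: a forward simple wave to the right is determined by its profile $z_r$ as a function of the backward characteristic label, and it extends for all $t$ without forming a shock precisely when it is rarefactive (Lemma~\ref{blowup_psystem}), but it reaches vacuum in infinite time exactly when the total spread of $z_r$ along the wave is large enough that $z$ must decrease to $0$. The total change of $u$ across the two outgoing waves, together with the contact jump, is constrained by the conserved quantity $u_0(\infty)-u_0(-\infty)$; comparing this with the maximum possible spread $m(-\infty)z_0(-\infty)+m(\infty)z_0(\infty)$ that the rarefactions can accommodate without hitting vacuum gives exactly the Vacuum Condition \eq{vacuum2}. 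So the logical skeleton is: (i) eventually the solution is two simple waves plus a contact; (ii) each simple wave either closes up to a global rarefaction or runs into an asymptotic vacuum; (iii) it closes up for both iff the ``momentum budget'' \eq{vacuum2} fails, and otherwise at least one of them must approach vacuum; (iv) note this is precisely the classical Riemann-problem vacuum criterion, which is why the same inequality appears.

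The main obstacle, I expect, is step (i): showing that the interaction genuinely ends in finite time when \eq{vacuum2} fails, rather than persisting through ever-weaker reflections. With a single contact one might worry that an outgoing forward wave, after leaving, is somehow replaced by further structure emanating from $x=0$; the point to nail down is that once the compactly-supported incoming data has been absorbed, the traces $z_l(t)$ and $z_r(t)$ become monotone (in fact constant along the outgoing characteristics) so no new wave is generated at the contact. I would handle this using Theorem~\ref{global RC} / Corollary~\ref{Then RC Young}: in the relevant regime the outgoing waves are rarefactive, and a rarefaction crossing the contact reflects only a rarefaction of the opposite family that is itself already accounted for, so there is no amplification and the ``wave content'' hitting the contact is exhausted in finite time. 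The secondary technical point is the quantitative vacuum estimate in step (ii) — relating the $L^\infty$ spread of the Riemann invariant profile on a simple wave to whether $z\to 0$ asymptotically — but this is a one-dimensional computation on a scalar simple wave (the $p$-system Riemann invariant is constant along characteristics, so $z$ along the wave is a monotone function of the characteristic parameter) and should follow from the wave-curve formulas \eq{taupc} together with the behavior noted after Theorem~3, $\tau = O(1)(1+t)^{2/(1+\gamma)}$, in the borderline case; away from the borderline it is immediate. Finally, the converse direction (if \eq{vacuum2} holds then an asymptotic vacuum is present) follows by the same budget argument run in reverse: the outgoing rarefactions simply cannot absorb the required jump in $u$ without $z$ being driven to $0$ along some characteristic, which by definition is an asymptotic vacuum.
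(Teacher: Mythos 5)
Your forward implication is fine: the ``budget'' computation you sketch in steps (iii)--(iv) is essentially the paper's argument, namely relating the eventual constant states on either side of the contact to the far-field states via the simple-wave relations \eq{simple} and the jump relation \eq{z_q_rel}, which gives $u_\infty-u_{-\infty}-m_{-\infty}z_{-\infty}-m_\infty z_\infty=-(1+Q)\,m_{-\infty}z_g<0$, so \eq{vacuum2} fails whenever the interaction ends in finite time.

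The genuine gap is in your step (i). You assert that ``since the data has compact variation, after some finite time $t_1$ the incoming waves from the far field have all arrived,'' after which the solution near $x=0$ consists of two frozen simple waves. That is exactly what fails when the Vacuum Condition holds: in Lagrangian coordinates the sound speed is $c=m\,z^{d}$, which tends to zero as the state approaches vacuum, so an incoming characteristic can decelerate and never reach the contact --- this non-arrival \emph{is} the asymptotic vacuum the lemma asserts. Compact support of the variation of the data does not bound the arrival times. By assuming arrival in finite time you have implicitly assumed the solution is eventually noninteracting, so the second statement (\eq{vacuum2} $\Rightarrow$ asymptotic vacuum) is begged rather than proved; and the obstacle you flag --- ever-weaker reflections sustaining the interaction --- is not the real one, since with a single contact a reflected wave leaves and never returns. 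What is needed instead is the paper's device: for each $t$ trace the incoming forward and backward characteristics from the contact back to $t=0$, obtaining monotone functions $x_-(t)\le 0\le x_+(t)$; constancy of the incoming Riemann invariants together with \eq{z_q_rel} yields
\[
  u_0(x_-(t)) - u_0(x_+(t)) + m_l\,z_0(x_-(t)) + m_r\,z_0(x_+(t))
  = (1+Q)\,m_l\,z_l(t) > 0 ,
\]
and if \eq{vacuum2} holds this forces at least one of $x_\pm(t)$ to stay bounded as $t\to\infty$; the incoming characteristic issuing from its limit point then never meets the contact, hence has a vertical asymptote, which is the asymptotic vacuum. With this replacement your quantitative step (ii) about the ``spread'' of the outgoing rarefactions becomes unnecessary.
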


\begin{proof}
Suppose that interaction ends in finite time $T\gg1$, and use
subscripts $g$ and $d$ to denote the constant states on the left and
right of the contact after the interaction has completed,
respectively, so that $u_g = u_l(T)$, etc.  Since there are
no shocks, the outgoing waves leaving the interaction region must be
rarefactions.  Using \eq{simple}, we relate these states to the extreme states
(subscripted by $\pm\infty$) as
\begin{align}
  u_g - u_{-\infty} &= m_{-\infty}\,(z_{-\infty} - z_g), \nn\\
  u_\infty - u_d &= m_{\infty}\,(z_{\infty} - z_d).
\label{ugd}
\end{align}
We now use \eq{z_q_rel} to relate the states across the jump,
\[
  u_d = u_g \com{and}  m_d\,z_d = m_g\,z_g\,Q.
\]
Now since $m_g=m_{-\infty}$ and $m_d=m_{\infty}$, we get
\[
  u_\infty - u_{-\infty} - m_{-\infty}\,z_{-\infty} 
    - m_\infty\,z_\infty = - (1+Q)\,m_{-\infty}\,z_g < 0,
\]
so that \eq{vacuum2} fails.

Now suppose that the vacuum condition holds.  Then the interaction
persists for all time, and because there are no shocks, the outgoing
waves must both be rarefactions.  For $t>0$, trace forward and
backward characteristics back from the contact at $x=0$ to the initial
time $t=0$, to define functions $x_-(t)<0$ and $x_+(t)>0$,
respectively.  Thus the forward rarefaction starting at $x_-(t)$ meets
the contact at $t$, etc., and since characteristics cannot intersect,
we have $\dot x_-\le0$ and $\dot x_+\ge0$.  Now, since Riemann
invariants are preserved on characteristics, we have
\begin{align*}
  u_l(t) + m_l\,z_l(t) &= u_0(x_-(t)) + m_l\,z_0(x_-(t)) \com{and}\\
  u_r(t) - m_r\,z_r(t) &= u_0(x_+(t)) - m_r\,z_0(x_+(t)),
\end{align*}
where $(z_0(x),u_0(x))$ is the initial data.  Using \eq{z_q_rel}, we
conclude that
\begin{align*}
  u_0(x_-&(t)) - u_0(x_+(t)) + m_l\,z_0(x_-(t)) + m_r\,z_0(x_+(t)) 
  \\ &= (1+Q)\,m_l\,z_l(t) > 0.
\end{align*}
Now since \eq{vacuum2} holds, at least one of $x_-(t)$ or $x_+(t)$
must converge to some finite $x_*$ as $t\to\infty$.  It follows that
the incoming characteristic beginning at $x_*$ (and also those
starting further out) cannot meet the contact in finite time, so
remains bounded for all time.  Thus this characteristic has a vertical
asymptote, and the solution contains an asymptotic vacuum.
\end{proof}

\begin{cor}
\label{vcf}
If a shock-free solution is eventually noninteracting, then the vacuum
condition \eq{vacuum2} fails, whatever the entropy profile.
\end{cor}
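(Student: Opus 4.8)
The plan is to observe that Corollary~\ref{vcf} is exactly the "interaction ends in finite time" half of Lemma~\ref{lemma single contact}, carried out with an arbitrary stationary middle region in place of a single contact. So I would first unpack the structure of an eventually noninteracting shock-free solution: for $t$ large, the solution consists of a left region carrying an outgoing backward wave, a time-independent ("stationary") middle region, and a right region carrying an outgoing forward wave, while for $|x|$ large the data sits at its limiting constant states $(z_{-\infty},u_{-\infty})$ and $(z_{+\infty},u_{+\infty})$, with entropy $m_{-\infty}$, $m_{+\infty}$ there. Since the solution is shock-free, the two outgoing waves are rarefactions (possibly of zero strength). Since the middle region is a time-independent solution of \eq{lag}, the first two equations of \eq{lag} force $u_x\equiv0$ and $p_x\equiv0$ away from contacts, while $[u]=0$ across each contact; hence $u\equiv U$ for a single constant $U$ throughout the middle region, in particular at both of its edges.

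Next I would record the simple-wave relations. The outgoing backward rarefaction is an isentropic simple wave of entropy $m_{-\infty}$ joining $(z_{-\infty},u_{-\infty})$ to the left edge of the middle region, where $u=U$; writing $z_{\bar g}>0$ for the value of $z$ there, \eq{simple} gives
\[
  U-u_{-\infty}=m_{-\infty}\,(z_{-\infty}-z_{\bar g}).
\]
Likewise, with $z_{\bar d}>0$ the value of $z$ at the left edge of the outgoing forward rarefaction,
\[
  u_{+\infty}-U=m_{+\infty}\,(z_{+\infty}-z_{\bar d}).
\]
Adding these and rearranging,
\[
  u_{+\infty}-u_{-\infty}-m_{-\infty}\,z_{-\infty}-m_{+\infty}\,z_{+\infty}
   =-\bigl(m_{-\infty}\,z_{\bar g}+m_{+\infty}\,z_{\bar d}\bigr)<0,
\]
since $m_{\pm\infty},z_{\bar g},z_{\bar d}>0$ (the middle state is bounded away from vacuum). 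In the notation of \eq{vacuum2} — $u_0(\pm\infty)=u_{\pm\infty}$, $z_0(\pm\infty)=z_{\pm\infty}$, $m(\pm\infty)=m_{\pm\infty}$ — this is precisely the statement that \eq{vacuum2} fails. Note that nothing here uses the internal structure of the middle region (number of contacts, smooth entropy variation, monotonicity), which is why the conclusion holds whatever the entropy profile.

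The only step needing care — and hence the main obstacle — is the first one: justifying rigorously that an eventually noninteracting shock-free solution really has this clean left-rarefaction / time-independent-middle / right-rarefaction form with constant far states, and in particular that $u$ takes a single constant value across the entire middle region. This is essentially built into the definition of "eventually noninteracting" together with \eq{lag} and the contact conditions $[u]=[p]=0$, but it should be stated explicitly; once it is in hand, the rest is the two-line estimate above, identical in spirit to the first half of the proof of Lemma~\ref{lemma single contact}.
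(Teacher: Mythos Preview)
Your proposal is correct and follows essentially the same approach as the paper: identify the two outgoing rarefactions joined by a stationary middle region with constant $u$ and $p$, write the simple-wave relations \eq{ugd} on each side, add them using $u_g=u_d$, and observe that the result is strictly negative. The paper's proof is terser---it simply says ``the proof proceeds as above and \eq{ugd} continues to hold''---but your more explicit justification that $u$ is a single constant across the middle region (via $u_x=p_x=0$ from \eq{lag} for stationary solutions, plus $[u]=0$ at contacts) is exactly the content the paper summarizes in one clause.
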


\begin{proof}
The proof proceeds as above and \eq{ugd} continues to hold, provided
subscripts $g$ and $d$ refer to the states on either side of the
varying entropy.  For large times $t>T$, the solution restricted to
the entropy profile is a stationary entropy wave, across which
velocity $u$ and pressure $p$ are constant.  Thus $u_g=u_d$ and
\eq{ugd} yields
\[
  u_\infty - u_{-\infty} - m_{-\infty}\,z_{-\infty} 
    - m_\infty\,z_\infty = - m_{-\infty}\,z_d - m_\infty\,z_g< 0,
\]
so the vacuum condition fails.
\end{proof}

In isentropic flow, the asymptotic vacuum is produced only by the
interaction of two opposite rarefaction waves~\cite{lizhao,Young p
  system}.  When a contact is present, a single strong rarefaction can
produce an asymptotic vacuum, since the leading edge of the
rarefaction crossing the contact may produce a reflected rarefaction,
and the interaction of the initial strong rarefaction with the
reflected rarefaction can lead to the vacuum.  According to
Corollary~\ref{TYcor}, one of the incoming waves must be a pure
rarefaction, while the other can contain compressive regions which are
changed by the interaction at the contact, or there could be no
opposite incoming wave.

To be specific, consider a backward rarefaction (initially compactly
supported in $(0,\infty)$) interacting with a $3$-contact (increasing
entropy jump), $m_r > m_l$, at $x=0$, with no incoming forward wave.
The profile of the outgoing waves is determined by the traces of the
states on either side of the contact discontinuity.  By \eq{z_q_rel}
and \eq{simple}, the initial data consists of constant states
$(z_l,u_l,m_l)$ and $(z_m,u_l,m_r)$ satisfying
\[
  m_r\,z_m = m_l\,z_l\,Q,
\]
together with a backward rarefaction given by
\[
  u_0(x) - u_l = m_r\,z_m - m_r\,z_0(x),
\]
for a decreasing function $z_0(x)$ with $z_0(0) = z_m$.  If there is
some $x_*$ such that
\[
  u_0(x_*) - m_r\,z_0(x_*) = u_l + m_l\,z_l,
\]
that is,
\[
  m_r\,z_0(x_*) = \frac{Q-1}2\,m_l\,z_l, \com{or}
  z_0(x_*) = \frac{Q-1}{2\,Q}\,z_m,
\]
then all backward characteristics beginning to the left of $x_*$ cross
the contact, while all those starting at $x\ge x_*$ asymptote, with
corresponding states approaching vacuum.  Note that the support of the
vacuum in the limit $t\to\infty$ is some interval of the form
$[0,x_\#]$.

\section{Shock-free solutions with Monotone Entropy}

We now prove our main theorem, which describes the structure of
shock-free solutions with a monotone entropy profile.

\begin{thm}
\label{mainthm}
Assume that the (variation of the) data is compactly supported and
that the entropy is monotone.  Then a globally defined shock-free
solution is either eventually noninteracting or contains an asymptotic
vacuum.
\end{thm}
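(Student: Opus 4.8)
The plan is to combine the structural results already in hand---Theorem~\ref{global RC} on the global $R/C$ structure, Lemma~\ref{BoundaryRC} on the boundary behavior, Lemma~\ref{blowup_psystem} on shock formation in isentropic regions, and the single-contact analysis of Lemma~\ref{lemma single contact} and Corollary~\ref{vcf}---into a dichotomy argument. Assume throughout (WLOG) that the entropy $m(x)$ is non-decreasing and supported in $[0,x_1]$, and that the solution is globally defined and shock-free. Since the data is compactly supported, outside the entropy support the flow is isentropic; because the solution is shock-free and stays smooth for all time, Lemma~\ref{blowup_psystem} forces the outgoing waves to the left of $0$ and to the right of $x_1$ to be pure rarefactions (any compressive region there would produce a shock in finite time). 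Inside $[0,x_1]$, waves may interact indefinitely, and the goal is to show that if interaction does \emph{not} cease in finite time, then an asymptotic vacuum must form.

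First I would set up the contradiction: suppose the solution interacts for arbitrarily long times but contains no asymptotic vacuum. The first step is to track the $R/C$ character of the outgoing rarefactions. By Lemma~\ref{BoundaryRC}, the outgoing forward rarefaction on the right $\{x>x_1\}$ reflects a backward \emph{compression} into the varying-entropy region, and the outgoing backward rarefaction on the left reflects a forward compression in. So compressive waves are continually being pumped into $[0,x_1]$. The second step is to use Theorem~\ref{global RC}: in a non-decreasing entropy field a forward $R$ can only be turned into a $C$ by crossing a backward $C$, and vice versa. Combined with the fact that compressions cannot survive to the outer edges, this should let me argue that the $R/C$ pattern inside the interaction region is self-consistent only in two ways: either the waves are eventually all rarefactive (plus the stationary contact/entropy wave), which is precisely "eventually noninteracting," or the waves keep reflecting with undiminished activity. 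The third step is to quantify that persistent activity. Trace forward and backward characteristics from the two edges $x=0$ and $x=x_1$ back to $t=0$, obtaining monotone functions $x_-(t)$ and $x_+(t)$ as in the proof of Lemma~\ref{lemma single contact}. Using conservation of Riemann invariants along these characteristics together with the jump/transmission relations \eq{rsjump} across the contact(s), derive the analogue of the identity $(1+Q)\,m_l z_l(t) = u_0(x_-(t)) - u_0(x_+(t)) + \cdots$; if interaction never ends then by compact support at least one of $x_\pm(t)$ converges to a finite limit $x_*$, and then the incoming characteristic from $x_*$ can never reach the entropy region, so it has a vertical asymptote---an asymptotic vacuum, contradicting our assumption.

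The main obstacle I anticipate is the middle step: controlling the global $R/C$ structure when the entropy support contains an \emph{interval} of variation (possibly with several jumps) rather than the single contact of Section~4. With a single contact, Corollary~\ref{TYcor} pins down exactly which wave can change character; with a monotone profile one must iterate Theorem~\ref{global RC} along characteristics that repeatedly cross the whole entropy band, and show that no "perpetual motion" of alternating $R$/$C$ reflections can persist \emph{without} either producing a shock (excluded by hypothesis) or driving $z\to 0$ somewhere along a trapped characteristic (the asymptotic vacuum). Making this rigorous likely requires a monotonicity or quasi-monotonicity argument for the traces $z_l(t)$, $z_r(t)$ of $z$ on the edges of the entropy region---showing that sustained interaction forces these traces to decrease monotonically to a limit, and that a \emph{positive} limit contradicts the nontriviality of the incoming data (via the Riemann-invariant identity above), while a zero limit is exactly the asymptotic vacuum. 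I would handle the several-jumps case by composing the single-jump transmission coefficients $Q_i$ from \eq{q def} and noting that their product still satisfies $\prod Q_i > 1$ for a net entropy increase, so the qualitative conclusions of Corollary~\ref{TYcor} and Lemma~\ref{BoundaryRC} survive. The reversal for non-increasing $m(x)$ is then immediate from the stated symmetry.
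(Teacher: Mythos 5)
Your proposal assembles the right ingredients (Theorem~\ref{global RC}, Lemma~\ref{BoundaryRC}, Lemma~\ref{blowup_psystem}, and the single-contact analysis), but the decisive step is missing, and you acknowledge as much: you never actually rule out perpetual internal reflection inside the entropy band when the entropy is monotone. That is the heart of the theorem --- Section~7 of the paper shows that for a \emph{non}-monotone profile such infinite reflection really does occur --- so it cannot be dispatched by saying the $R/C$ pattern ``is self-consistent only in two ways.'' The paper closes exactly this gap with a concrete construction: for $t\ge T$ it follows the backward characteristics $\Gamma_-(t)$ emanating from $(x_1,t)$, defines the continuous curve $x_*(t)$ where $\beta$ first changes sign along them, and uses the Riccati equation \eq{rem12} (and Corollary~\ref{TYcor} at contacts) to show $\alpha\le 0$ on that curve. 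Since the solution is not eventually noninteracting, Lemma~\ref{BoundaryRC} supplies a late time $t_\#$ with $\alpha(x_1-,t_\#)>0$ and $\beta(x_1-,t_\#)<0$; tracing the forward characteristic back from $(x_1-,t_\#)$ to its first meeting with $x_*(t)$, one finds $\alpha\le 0$ there while $\beta\le 0$ along the entire piece, contradicting Theorem~\ref{global RC} (a forward non-$R$ wave can become $R$ only by crossing backward $R$ waves). The assumption of no asymptotic vacuum enters precisely to guarantee that these characteristics traverse $[x_0,x_1]$ in finite time, so the construction makes sense. None of this machinery appears in your outline, and without it the argument does not close.

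Your third step also fails as stated. The claim that ``if interaction never ends then by compact support at least one of $x_\pm(t)$ converges to a finite limit'' is exactly the part of the proof of Lemma~\ref{lemma single contact} that uses the Vacuum Condition \eq{vacuum2}, which you are not entitled to assume here; if \eq{vacuum2} fails, both $x_\pm(t)$ can escape to infinity with no contradiction from the Riemann-invariant identity, and persistent interaction could a priori be sustained entirely by waves reflected within the entropy band rather than by a trapped incoming characteristic --- ruling out that scenario is again the missing middle step. (A smaller slip: by Lemma~\ref{BoundaryRC}, with $m$ non-decreasing a backward $R$ emerging to the \emph{left} reflects a forward $R$, not a forward compression; only the right edge produces reflected compressions, and it is this one-sidedness that the paper's argument exploits.)
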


\begin{proof}
Assume without loss of generality that the entropy is non-decreasing.
We assume that the solution is shock-free, has no asymptotic vacuum,
and is not eventually noninteracting, and derive a contradiction.
Denote the interval on which the entropy varies by $[x_0,x_1]$.
Since the data is compactly supported, there is some $T\gg1$
such that
\[
  \beta(x_1+,t) = 0 \com{and}
  \alpha(x_0-,t)=0  \com{for every} t\ge T.
\]
Next, since there are no shocks, by Lemmas~\ref{RCpsystem}
and~\ref{blowup_psystem}, the outgoing waves must be rarefactions, so
that
\[
  \alpha(x_1+,t) \ge 0 \com{and}
  \beta(x_0-,t) \ge 0 \com{for each} t \ge 0.
\]
Because there is no asymptotic vacuum, all forward and backward
characteristics pass through the interval $[x_0,x_1]$ of varying
entropy in finite time.

It follows from Lemma~\ref{BoundaryRC} that
\[
  \alpha(x_1-,t) \ge 0 \com{and}
  \beta(x_1-,t) \le 0 \com{for} t \ge T.
\]

For each $t\ge T$, denote the backward characteristic starting from
the point $(x_1,t)$ by $\Gamma_-(t)$, and define
\[
  x_*(t) = \min\{x_0\le x \le x_1\,|\,
        \beta(\xi+,\tau)\le 0,\ \forall \xi \ge x,\ 
        (\xi,\tau) \in \Gamma_-(t)\},
\]
so $x_*(t)$ is the first possible point on the characteristic that
$\beta$ becomes positive.  The curve $x_*(t)$ is continuous, since
$\beta$ is continuous away from contacts, and if $\beta$ first changes
sign at a contact, $\beta(x_c+)\le 0\le \beta(x_c-)$, then we have
$x_*(t)=x_c$.  In this case, continuity of $x_*(t)$ follows since
$\beta(x_c-)$ is a continuous function of $\alpha(x_c+)$ and
$\beta(x_c+)$.

By definition, we have $\beta \le 0$ on the right of the curve
$x_*(t)$, while also $\beta=0$ on the curve (except possibly at a
contact).  Away from a contact, we must have $\beta^\backprime\ge0$, so that 
$\alpha\le 0$ at $x_*(t)$, by \eq{rem12}.  At a contact,
Corollary~\ref{TYcor} also implies that the forward wave is $C$, so again
$\alpha\le0$. 

Now, since the solution is not eventually noninteracting, there is
some $t_\#$ such that
\[
  \alpha(x_1-,t_\#) > 0, \com{and} \beta(x_1-,t_\#) < 0,
\]
by Lemma~\ref{BoundaryRC}.  We now trace the forward characteristic
$\Gamma_+$ back from $(x_1-,t_\#)$ until it first meets the curve
$x_*(t)$ at the point $(x_\dag,t_\dag)$, say.  We now have
\[
  \alpha(x_\dag,t_\dag) \le 0 \com{and} \alpha(x_1-,t_\#) > 0,
\]
while also 
\[
  \beta(x,t) \le 0 \com{for} x_\dag\le x\le x_\#,\ (x,t)\in\Gamma_+,
\]
which together contradict Theorem~\ref{global RC}.
\end{proof}

\subsection{Piecewise Constant Entropy}

We can describe the structure of solutions in more detail if we make
the further simplifying assumption that the entropy is piecewise
constant and monotone non-decreasing.  By this we mean that the
entropy has finitely many jumps, while $u$ and $p$ remain $C^2$.  Our
results apply directly to monotone non-increasing piecewise constant
entropy with appropriate modification, and we expect that the
extension to general shock-free solutions with varying monotone
entropy holds with technical changes in the proofs.

\begin{lemma}
\label{noSF}
Suppose the entropy is piecewise constant non-increasing.  If the
initial data are never forward compressive but somewhere backward
compressive, then there are no shock-free solutions.
\end{lemma}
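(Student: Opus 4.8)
The plan is to argue by contradiction using the same machinery as in Theorem~\ref{mainthm}, but reading the monotonicity the other way. Suppose the entropy is piecewise constant and non-increasing, the initial data are never forward compressive ($\alpha_0 \ge 0$ everywhere) but are backward compressive somewhere ($\beta_0 < 0$ at some point), and suppose for contradiction that a shock-free solution exists. Since the entropy is non-increasing, the symmetric version of Theorem~\ref{global RC} applies with the roles reversed: a backward $C$ wave can change character only if it crosses a forward $R$, and (most relevantly) the forward character is unchanged away from the entropy support. So first I would track the backward compression: pick a point where $\beta_0 < 0$ and follow the backward characteristic $\Gamma_-$ on which $\beta(x_0,t_0) < 0$.

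Next I would show the forward waves stay rarefactive long enough to drive a contradiction. Because shock-free solutions contain no shocks, any wave leaving the (bounded) entropy support must be a rarefaction by Lemmas~\ref{RCpsystem} and~\ref{blowup_psystem}; combined with $\alpha_0 \ge 0$ on the data, this forces $\alpha \ge 0$ on each forward characteristic \emph{until it enters} the varying-entropy region. Inside the entropy support, I would use Lemma~\ref{BoundaryRC} (non-increasing case) together with Corollary~\ref{TYcor} applied at each $1$-contact: a forward $R$ entering a decreasing-entropy jump can only stay $R$ or reflect, and by the symmetric corollary the forward character can flip to $C$ only across a contact that simultaneously carries a backward $R$. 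So I would define, as in the proof of Theorem~\ref{mainthm}, a curve $x_*(t)$ separating the region where the forward wave has gone compressive from where it is still rarefactive, observe that along this curve $\alpha = 0$ and hence (away from contacts) $\alpha^\backprime \ge 0$ forces $\beta \le 0$ there by \eq{rem12}, while at a contact Corollary~\ref{TYcor} gives the same. Then I would trace a backward characteristic carrying $\beta < 0$ back to where it first meets $x_*(t)$ and obtain $\beta$ increasing through zero along a path on which $\alpha \le 0$ throughout — contradicting the (reversed) Theorem~\ref{global RC}, exactly as in Theorem~\ref{mainthm}.

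The one genuinely new ingredient compared with Theorem~\ref{mainthm} is handling the possibility of an asymptotic vacuum: the earlier theorem allowed that escape, but here I want to rule out \emph{all} shock-free solutions. The point is that an asymptotic vacuum requires opposing rarefactions (or a rarefaction crossing a contact) of sufficient combined strength — quantitatively, the Vacuum Condition \eq{vacuum2} — and when the data carry \emph{no} forward compression but \emph{do} carry a backward compression, the sign structure of the canonical variables is incompatible with \eq{vacuum2}; more directly, a backward compression present in the data must, by Lemma~\ref{blowup_psystem} applied in the outermost isentropic region (entropy is piecewise constant, so the tails are isentropic), already blow up $|u_x|$ or $|p_x|$ in finite time \emph{unless} it is first neutralized by crossing the entropy field, which by Corollary~\ref{TYcor} requires an opposing forward $R$ at a $1$-contact — and that reflection then changes the opposing forward wave to $C$, reinjecting forward compression and returning us to the contradiction above.

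The main obstacle I expect is the bookkeeping at the contacts: with finitely many $1$-contacts stacked in a non-increasing staircase, a single backward compression can be partially neutralized at one jump and regenerated at another, and one must check that no finite sequence of such reflections can launder away the compression without ever producing a forward $C$ somewhere. I would handle this by an induction on the number of contacts the offending characteristic has crossed, using Corollary~\ref{TYcor} at each step to maintain the invariant ``there is a forward $C$ or a backward $C$ present,'' which by Lemma~\ref{blowup_psystem} in the isentropic tails is incompatible with a globally shock-free solution.
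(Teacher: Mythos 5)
The decisive step of your argument fails. In your third paragraph (and in the induction invariant of the fourth) you claim that when the backward compression is neutralized at an entropy jump by a crossing forward $R$, ``that reflection then changes the opposing forward wave to $C$, reinjecting forward compression.'' This contradicts the structure of Lemma~\ref{Then RC Young}: its four conditions are mutually exclusive, so at most one wave can change character at a contact at any given time; in particular, whenever the backward wave changes $C\to R$, the crossing forward wave keeps its character. Concretely, at a $1$-contact ($Q<1$) the change $C_{in}^-\to R_{out}^-$ holds iff $m_l\dot z_l<\dot u_l<Qm_l\dot z_l$ (so $\dot z_l<0$), and then $\dot s_l<(1+Q)\,m_l\dot z_l<0$ and $\dot s_r=\dot u_l+Q\,m_l\dot z_l<2Q\,m_l\dot z_l<0$: the forward wave is $R$ on \emph{both} sides of the jump. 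So the invariant ``a forward $C$ or a backward $C$ is always present'' is not preserved by the very mechanism you cite, and the induction over contacts collapses: with decreasing jumps a backward compression can in principle be converted to $R$ by crossing forward rarefactions without ever creating a forward $C$. (Be aware also that the paper's own proof, and the standing assumption of this subsection, really treat the non-decreasing case, where the jumps are $3$-contacts; there Corollary~\ref{TYcor} gives the opposite and much more rigid fact that a backward $C$ can be cancelled only if both crossing forward waves are already $C$, and that rigidity is what the argument must exploit.)

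The paper's proof is also quite different from your first two paragraphs and needs none of the $x_*(t)$ machinery. It isolates the left-most isentropic block $\mathcal B$ that ever contains a backward $C$; by Corollary~\ref{TYcor} a forward $C$ can only be created at a jump whose crossing backward waves are $C$, so no forward $C$ exists in $\mathcal B$ or to its left, hence the backward $C$ can never be cancelled; it therefore either exits the varying entropy field into a constant-entropy tail, or is trapped inside the isentropic block $\mathcal B$ by a backward asymptotic vacuum, and in either case Lemma~\ref{blowup_psystem} (valid even when a vacuum forms, by \cite{young com}) forces gradient blowup in finite time, a contradiction. By contrast, your attempt to rerun the proof of Theorem~\ref{mainthm} has its own problems: that proof assumes from the outset that there is no asymptotic vacuum (precisely the case you still have to exclude), and when the monotonicity is reversed the sign of $k_2$ in \eq{k def} flips, so the step ``$\beta=0$ and $\beta^\backprime\ge0$ imply $\alpha\le0$'' does not carry over by symmetry; moreover \eq{rem12} controls only $\alpha^\prime$ and $\beta^\backprime$, not the quantity $\alpha^\backprime$ you invoke on your separating curve. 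The short block-structure argument above is the fix.
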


\begin{figure}[htp]
\centering
\begin{tikzpicture}[scale=0.8]
\draw[->] (-3,0) -- (-3,2);
\node at (-3.2,1.8) {$t$};
\draw[->] (-3,0) -- (-1,0);
\node at (-1.2,-0.3) {$x$};
\foreach \xx in {1,1.4,3,4,5}
  \draw[dashed] (\xx,0) -- (\xx,4);
\draw (2.6,0) .. controls (2,0.5) and (1.6,0.8) .. (1.5,4);
\draw (1.6,3.3) -- (1.76,2.7);
\foreach \dd in {0.1,0.2,0.3}
{
  \draw (1.8,2+\dd) -- (2+.5*\dd,1.35+1.5*\dd);
  \draw (2.4,1+\dd) -- (2.8+.3*\dd,0.5+1.5*\dd);
  \draw (3.1,0.4+\dd) -- (3.6+.4*\dd,0.0+1.5*\dd);
  \draw (4.1,1.3+\dd) -- (4.7+.5*\dd,0.8+1.6*\dd);
  \draw (3.3-0.8*\dd,0.7+0.9*\dd) -- (3.9-0.05*\dd,1.3+0.5*\dd);
  \draw (5.8+4*\dd,0) -- (5.1,0.8+\dd);
  \draw (3.1,2.2+2*\dd) -- (3.9-0.05*\dd,1.5+0.8*\dd);
}
\node at (5,-0.3) {$x_1$};
\node at (1,-0.3) {$x_0$};
\node at (2.2,3.8) {$\mathcal B$};
\end{tikzpicture}
  \caption{Proof of Lemma~\ref{noSF}}
\end{figure}

\begin{proof}
If a backward $C$ leaves the varying entropy field, then shocks form
in finite time by Lemma~\ref{blowup_psystem}.  If not, because there
are only finitely many isentropic blocks, we can isolate the left-most
isentropic block $\mathcal B$ containing some backward $C$.  By
Corollary~\ref{TYcor}, forward $C$ can form only at a $3$-contact when
the crossing backward waves are $C$.  Thus there is no forward $C$ in
block $\mathcal B$ or the blocks to the left of it.  Hence the
backward $C$ in this block cannot be cancelled.  Thus the backward $C$
must be obstructed by a backward asymptotic vacuum in (isentropic)
block $\mathcal B$.  But in this case, a singularity forms in finite
time by Lemma~\ref{blowup_psystem}.
\end{proof}

\begin{lemma}
\label{pcni}
Suppose the entropy is piecewise constant non-increasing.  If the
initial data are never compressive, then shock-free solutions are
never compressive.  For such solutions, we also have:
\begin{enumerate}[(a)]
\item If the data is constant to the right of the entropy jumps, and
  if there is some initial forward $R$, then there are no shock-free
  solutions.
\item If the solution is shock-free and eventually noninteracting,
  then the upper boundary of the interaction region is a
  \emph{forward} characteristic which does not end at an interior
  contact.
\end{enumerate}
\end{lemma}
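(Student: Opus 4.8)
The plan is to combine the monotone-entropy structure theorem (Theorem~\ref{mainthm}) with the boundary reflection lemma (Lemma~\ref{BoundaryRC}) and the contact-crossing corollary (Corollary~\ref{TYcor}), exploiting the fact that for non-increasing entropy the reflected wave at a jump has \emph{reversed} character. First I would establish the overall statement that shock-free solutions are never compressive: since the entropy is piecewise constant non-increasing, each contact is a $1$-contact, so by Corollary~\ref{TYcor} a forward $C$ can be created only when the crossing backward waves are $C$, and a backward $C$ only when the crossing forward waves are $C$; combined with Lemma~\ref{RCpsystem} (character is preserved in isentropic blocks), an inductive argument over the finitely many blocks, working outward from the data, shows that no compressive wave can ever appear if none is present initially. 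Here I would also invoke Lemma~\ref{noSF}: if a backward $C$ did form it would have to exit the entropy region or be obstructed by an asymptotic vacuum, and in either case Lemma~\ref{blowup_psystem} forces a shock in finite time, contradicting shock-freeness.

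For part (a), assume the data is constant to the right of all the jumps and there is an initial forward $R$. This forward $R$ must propagate out to the right edge $x_1$. By Lemma~\ref{BoundaryRC}, since the entropy is non-increasing, a forward $R$ emerging to the right reflects a backward wave with \emph{reversed} character back into the varying-entropy region — that is, a backward $C$. But we have just shown no backward $C$ can exist in a shock-free solution with this data; alternatively, such a backward $C$ would, by the argument of Lemma~\ref{noSF}, be trapped and produce a shock via Lemma~\ref{blowup_psystem}. Hence no shock-free solution exists.

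For part (b), suppose the shock-free solution is eventually noninteracting. By Theorem~\ref{mainthm} and the assumed absence of an asymptotic vacuum (which is implicit in "eventually noninteracting"; if an asymptotic vacuum were present, the solution would not be eventually noninteracting in the sense defined), the interaction region has a finite upper boundary. I would argue this boundary is a forward characteristic by elimination: it cannot be a backward characteristic, because a backward characteristic bounding the interaction region from above would, after crossing all the $1$-contacts, emerge as some backward wave, and by Lemma~\ref{BoundaryRC} each crossing reflects a forward wave of reversed character; tracking which reflections persist and using that the solution is never compressive, one finds that interaction must continue past such a curve. It also cannot end at an interior contact, since at a stationary contact the incoming and outgoing waves on both sides are linked by the relations \eq{z_q_rel}, \eq{rsjump}, and a curve terminating there would leave a nontrivial outgoing rarefaction still crossing contacts further out, contradicting that interaction has ceased. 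The main obstacle will be this last step: making the case analysis for part (b) airtight requires carefully tracking the finitely many reflected waves generated at each $1$-contact and showing that the last surviving interaction is always carried by a forward characteristic reaching the right edge; I would organize this by induction on the number of contacts the candidate boundary curve crosses, using Corollary~\ref{TYcor} at each step to control the characters of the reflected waves and Lemma~\ref{RCpsystem} to propagate them through the isentropic blocks.
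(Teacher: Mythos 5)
Your proposal has two genuine problems. The first is an orientation error in applying Corollary~\ref{TYcor} and Lemma~\ref{BoundaryRC}, and it makes the argument internally inconsistent. The paper's proof (and the opening of this subsection) treats jumps across which $m$ increases, i.e.\ $3$-contacts, and there the key fact is that a backward wave can turn from $R$ to $C$ precisely when the crossing forward waves are $R$: compressions \emph{can} be created out of purely rarefactive data, and this is exactly the mechanism behind part (a). Your claimed rule that a compression can only be created by a crossing compression, and hence that ``no compressive wave can ever appear,'' is therefore false in the intended setting and proves too much --- indeed your own part (a) then manufactures a backward $C$ by reflection, contradicting your first paragraph. (If you instead insist on the literal $1$-contact reading, the failure moves: at a $1$-contact a forward $R$ can turn $C$ when the crossing backward waves are $R$, and ``reversed'' in Lemma~\ref{BoundaryRC} means reversed relative to the non-decreasing case, so a forward $R$ emerging to the right reflects a backward $R$, not a backward $C$; with a single jump, part (a) would then simply be false.) The correct route to the first statement is the weaker claim you only gesture at: by Corollary~\ref{TYcor} any forward $C$ must be preceded by a backward $C$, and a backward $C$ together with forward-noncompressive data forces a shock by Lemma~\ref{noSF}; hence a shock-free solution is nowhere compressive. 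For (a) you also need that the forward $R$ cannot be cancelled before reaching the right-most jump (cancellation would require crossing backward $C$, which do not exist), a point you assert without argument.

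The second problem is that part (b) is not actually proved: you concede the main case analysis is an obstacle, and the elimination sketch never identifies the boundary curve. The paper's argument is short and concrete: let $T$ be the last time at which waves cross the right-most jump $x_1$, and claim the boundary is the forward characteristic traced back from $(x_1,T)$. If a backward $R$ lay above this characteristic, it would reflect a forward $R$ at the varying entropy, which upon reaching $x_1$ would reflect a backward $C$ --- impossible in a nowhere-compressive shock-free solution; and if the characteristic terminated at an interior contact, a backward rarefaction would emerge there and later reflect another forward wave crossing $x_1$ after time $T$, contradicting the choice of $T$. Without this extremal-time device your proposed induction on the number of contacts crossed has no anchor, and the claim that the boundary cannot be a backward characteristic is never established.
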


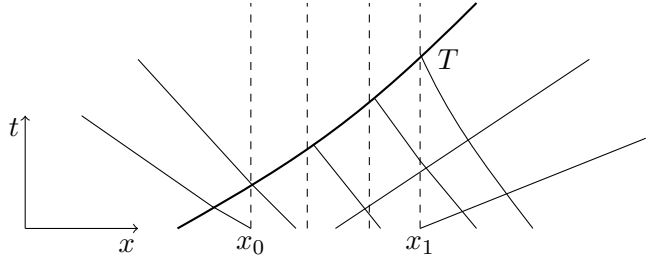
\begin{figure}[htp] \centering
\begin{tikzpicture}[scale=0.75]
\draw[->] (-3,0) -- (-3,2);
\node at (-3.2,1.8) {$t$};
\draw[->] (-3,0) -- (-1,0);
\node at (-1.2,-0.3) {$x$};
\foreach \xx in {1,2,3.1,4}
  \draw[dashed] (\xx,0) -- (\xx,4);
\draw[thick] (-0.3,0) .. controls (2,1.3) and (3,2) .. (5,4);
\draw (6,0) .. controls (5,1.3) and (4.5,2) .. (4,3.1);
\draw (1.8,0) -- (1,0.8) -- (-1,3);
\draw (1,0) -- (0.3,0.4) -- (-2,2);
\draw (5,0) -- (4,1.2) -- (3.2,2.3);
\draw (3.3,0) -- (2.1,1.5);
\draw (2.5,0) -- (7,3);
\draw (4,0) -- (8,1.6);
\node at (4.5,3) {$T$};
\node at (1,-0.3) {$x_0$};
\node at (4,-0.3) {$x_1$};
\end{tikzpicture}
  \caption{Shock-free eventually noninteracting solution with
    rarefaction data}
\end{figure}

\begin{proof}
Recall from Corollary~\ref{TYcor} that a forward wave can change from
$R$ to $C$ at a $3$-contact, only when the crossing backward waves are
$C$.  Thus, any backward $C$ must appear earlier than forward $C$.
Since there are no $C$ in the initial data, by Lemma~\ref{noSF},
shock-free solutions are always non-compressive.  This proves the
first statement.

We now note the following consequences of Corollary~\ref{TYcor}:
\begin{enumerate}
\item
If there are no incoming backward waves while the incoming forward
waves are rarefactive on some piece of the right-most $3$-contact,
then the reflected backward waves are compressive.  If this happens, a
shock necessarily forms in finite time, by above.
\item
In a shock-free solution, a forward $R$ cannot be cancelled since the
backward waves are nowhere $C$.  
\end{enumerate}

Statement (a) now follows from 1 and 2, because the initial forward
$R$ remains $R$ until it meets the last contact, and so reflects a
backward $C$.

Finally assume the solution is shock-free and eventually
noninteracting.  Let $T$ denote the maximum time at which waves cross
the right-most entropy jump (at $x_1$).  We claim the forward
characteristic traced back from $(x_1,T)$ is the upper bound of the
interaction region.  Since our shock-free solution is nowhere $C$, any
backward $R$ above this characteristic would reflect a forward $R$,
which in turn reflects a backward $C$ at $x_1$.  If this
characteristic ended at an interior contact, a backward rarefaction
would emerge, and later reflect another forward wave, a contradiction.
\end{proof}

\begin{lemma}
\label{AV}
Suppose the entropy is piecewise constant non-increasing.  If the
initial data are nowhere compressive, then shock-free solutions
contain an asymptotic vacuum if and only if \eq{vacuum2} holds.
\end{lemma}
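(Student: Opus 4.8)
The plan is to obtain the ``if'' direction immediately from Corollary~\ref{vcf} and Theorem~\ref{mainthm}, and to prove the ``only if'' direction by extending to finitely many contacts the characteristic-tracing argument used for a single contact in the proof of Lemma~\ref{lemma single contact}. For the ``if'' direction, suppose \eq{vacuum2} holds: a shock-free solution is globally defined, the entropy is monotone, and the data variation is compactly supported, so Corollary~\ref{vcf} shows the solution cannot be eventually noninteracting, and Theorem~\ref{mainthm} then forces it to contain an asymptotic vacuum; nothing further is needed here.

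For the ``only if'' direction one must genuinely recover \eq{vacuum2} from the presence of the asymptotic vacuum, which does not follow formally from the two theorems just cited. By Lemma~\ref{pcni} the solution is nowhere compressive, so every wave emerging past the ends $x_0<x_1$ of the entropy support is a simple rarefaction (or trivial), each characteristic is strictly $x$-monotone until it meets the asymptotic vacuum (hence crosses each of the finitely many contacts at most once), and any asymptotic-vacuum characteristic $\Gamma$ eventually stays in a single isentropic block, where $c=mz^d\to0$ forces $z\to0$. Following the proof of Lemma~\ref{lemma single contact}, I would fix a large $t$ and trace a forward characteristic and a backward characteristic back to $t=0$ through the intervening isentropic blocks and contacts; inside each block the relevant Riemann invariant ($s$ along the forward characteristic, $r$ along the backward one) is constant along it, while across each contact $r$ and $s$ transform by \eq{rsjump} (equivalently by \eq{z_q_rel} and \eq{rs def}). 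Composing the contact relations, the interior entropy values drop out, just as the factors $Q$ are absent from the corresponding computation in the proof of Corollary~\ref{vcf}, leaving an identity in $u_0$ and $z_0$ evaluated at the two traced feet with a nonnegative right-hand side. Since $\Gamma$ obstructs all characteristics of its family lying beyond it on one side, at least one foot stays bounded as $t\to\infty$ while, the data variation being compactly supported, the other recedes to $+\infty$ or $-\infty$; passing to the limit, and using the rarefactive (hence monotone) structure of the Riemann invariants near the ends, then yields \eq{vacuum2}.

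The real content is this last tracing computation with several contacts in place of one: following the two characteristics monotonically all the way down to $t=0$, checking that the jump relations across the (possibly several) $3$-contacts combine so that the intermediate entropies cancel and leave precisely the combination of extreme states appearing in \eq{vacuum2}, with the correct sign, and verifying that the obstruction caused by $\Gamma$ pins down which foot stays bounded (this is where compact support of the data variation enters). One must also be careful not to reason in a circle: given Theorem~\ref{mainthm} together with the elementary fact that an eventually noninteracting shock-free solution contains no asymptotic vacuum, the three statements ``\eq{vacuum2} fails'', ``no asymptotic vacuum'', and ``eventually noninteracting'' are mutually equivalent, so the ``only if'' direction cannot be obtained for free and really requires the estimate above. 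A subsidiary step is the localization of $\Gamma$ into a single isentropic block together with the identification of the vacuum there in terms of the boundary traces, for which the $p$-system vacuum analysis of~\cite{Young p system} is the natural input.
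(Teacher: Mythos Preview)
Your handling of the ``if'' direction matches the paper, and you correctly observe that the ``only if'' direction requires a genuine estimate rather than a formal deduction from Corollary~\ref{vcf} and Theorem~\ref{mainthm}.

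The gap is in your implementation of that estimate. The contact relation \eq{rsjump} couples $r$ and $s$: to recover $s_l$ from the right-hand values at a contact you need both $s_r$ and $r_r$. Along a forward characteristic $s$ is constant within each isentropic block, but $r$ is not --- it is carried by the backward characteristics crossing that point. So when your forward characteristic meets a contact, you cannot update $s$ without knowing $r$ there, and that value depends on characteristics you have not traced. The same obstruction applies to your backward characteristic. Hence ``composing the contact relations'' along just two characteristics does not close once there is more than one contact, and your assertion that ``the interior entropy values drop out'' is unjustified. Your analogy with Corollary~\ref{vcf} is also off: in that corollary the entropy region is a stationary wave with constant $u$ and $p$ for large $t$, so no tracing through active contacts is needed, which is precisely why the $Q$'s never appear.

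The paper avoids this obstruction with two ingredients you do not invoke. First, it localizes the vacuum via Lemma~\ref{pcni}(b): the right-most backward characteristic $\Phi$ must itself asymptote with $\Phi(\infty)\ge x_1$, for otherwise the interaction would end in finite time; consequently $\Phi$ never crosses a contact and the simple relation along $\Phi$ involves only $m_\infty$. Second, along a forward characteristic $\Psi$ (chosen to start left of the data support and to reach $x_1$ at a late time $t_*$) the paper telescopes $u$ rather than a Riemann invariant. Since $u$ is continuous across every contact and the simple-wave relation \eq{simple} gives the change of $u$ in each block in terms of $z$, the sum closes; the intermediate $z$-terms are then controlled by the one-sided bound $m_{k+1}\,z(\psi_k+)=Q_k\,m_k\,z(\psi_k-)>m_k\,z(\psi_k-)$ from entropy monotonicity --- the interior entropies are estimated away, not cancelled. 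The two characteristics occupy different spatial positions at time $t_*$, and the gap $u(\phi_1)-u(\psi_n)\ge 0$ is bridged using $u_x=\tfrac12(r_x+s_x)\ge 0$, which holds because the solution is nowhere compressive. Sending $t_*\to\infty$ kills the residual $z$-terms and yields \eq{vacuum2}.
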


\begin{proof}
By Corollary~\ref{vcf} and Theorem~\ref{mainthm}, we need only show
that if there is an asymptotic vacuum, then \eq{vacuum2} holds.

First, we claim that the right-most backward characteristic, denoted
by $x=\Phi(t)$ forms an asymptotic vacuum to the right of the entropy
field, so that $\Phi(\infty)\ge x_1$.  If not, the interaction must
end in finite time by Lemma~\ref{pcni}(b).  Next, for any time $T$,
there exists a forward characteristic $\Psi$ which crosses all
$3$-contacts and which meets $x_1$ at some $t_*>T$, so that
\[
  \Psi(0) < x_0 \com{and} \Psi(t_*) = x_1, \quad t_*>T.
\]
Moreover, by choosing $T$ large enough, we can assume $\Psi(0)$ is to
the left of the support of the initial data.  We denote by
$\psi_0,\dots,\psi_n$ the points at which $\Psi$ crosses the $x$-axis
and contacts, respectively, as in Figure~\ref{vacpf}.  Also set
$\phi_0=(\Phi(0),0)$ and $\phi_1=(\Phi(t_*),t_*)$.

\begin{figure}[htp] \centering
\begin{tikzpicture}[scale=0.65]
\draw[->] (-3.5,0) -- (-3.5,2);
\node at (-3.7,1.8) {$t$};
\draw[->] (-3.5,0) -- (-1.5,0);
\node at (-1.7,-0.3) {$x$};
\foreach \xx in {1,2,3.1,4,4.8,6}
  \draw[dashed] (\xx,0) -- (\xx,7);
\draw[thick] (-0.3,0) .. controls (2,1.3) and (3,2) .. (6,6);
\draw[thick] (8,0) .. controls (7,1.3) and (6.5,2) .. (6.2,7);
\draw[dotted] (0,6) -- (8,6);
\node at (-0.5,6) {$t_*$};
\node at (1,-0.3) {$x_0$};
\node at (6,-0.3) {$x_1$};
\node at (8.3,0.3) {$\phi_0$};
\node at (6.6,6.3) {$\phi_1$};
\node at (-0.5,0.3) {$\psi_0$};
\node at (0.7,1) {$\psi_1$};
\node at (2.8,2.7) {$\psi_k$};
\node at (5.7,6.3) {$\psi_n$};
\node at (7,3) {$\Phi$};
\node at (5.3,4.6) {$\Psi$};
\end{tikzpicture}
  \caption{Asymptotic vacuum}\label{vacpf}
\end{figure}
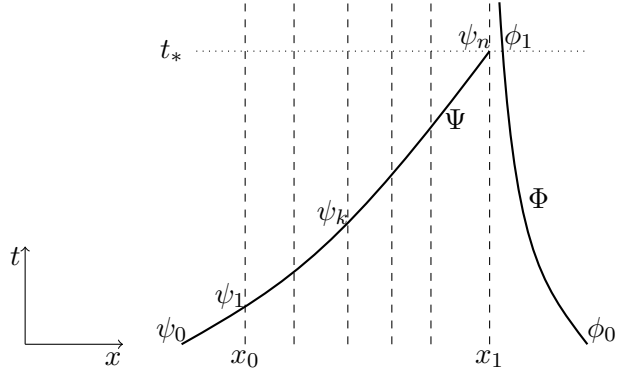

Since there are no compressions, we have for any $(x,t)$,
\[
  u_x = \frac{r_x+s_x}2 \ge 0,
\]
and recall that $u$ is $C^2$.  We write
\begin{align}
  u_0(\infty) - u_0(-\infty) &= u(\phi_0)-u(\phi_1)
  	+u(\phi_1)-u(\psi_n)+u(\psi_n)-u(\psi_0) \nn\\
  &\ge  u(\phi_0)-u(\phi_1)+u(\psi_n)-u(\psi_0).
\label{ueq}
\end{align}
Now we use \eq{simple} to write
\[
  u(\phi_0)-u(\phi_1) = m_\infty\,z_0(\infty) - m_\infty\,z(\phi_1),
\]
and, telescoping, we write
\begin{align*}
  u(\psi_n)-u(\psi_0) &= u(\psi_n)-u(\psi_{n-1}+)
         +\dots +u(\psi_1-)-u(\psi_0)\\
  &= m_n\,z(\psi_{n-1}+)-m_n\,z(\psi_n-) +\dots \\
    &\qquad\qquad{}+ m_{-\infty}\,z(\psi_0)-m_{-\infty}\,z(\psi_1-)\\
  &>{} -m_n\,z(\psi_n-)+ m_{-\infty}\,z_0(-\infty),
\end{align*}
where we have used
\[
  m_{k+1}\,z(\psi_k+) = Q_k\,m_k\,z(\psi_k-) > m_k\,z(\psi_k-),
\]
by \eq{z_q_rel}.  Here the $m_k$ are the intermediate entropy levels
(with $m_0=m_{-\infty}$), and $Q_k>1$ the corresponding entropy jumps.
Equation \eq{ueq} thus becomes
\begin{align*}
  u_0(\infty) - u_0(-\infty)\ge 
     m_\infty&\,z_0(\infty)+ m_{-\infty}\,z_0(-\infty)\\
    & {}  - m_\infty\,z(\phi_1)-m_n\,z(\psi_n-),
\end{align*}
and the last two terms vanish in the limit as $t_*\to\infty$, yielding
\eq{vacuum2}.
\end{proof}

\section{Examples of shock-free solutions}

When the entropy is monotonic, a nontrivial shock-free solution must
contain an asymptotic vacuum or must be eventually non-interacting.
Here we describe a general method for constructing eventually
noninteracting shock-free solutions.  

Begin by specifying a piecewise smooth ($C^2$) compactly supported
entropy profile.  Fix some constant $x_0$ (say the location of an
entropy jump), and specify data $(z,u)$ or $(r,s)$ on a compact
$t$-interval at this $x_0$.  Treat this as Cauchy data and evolve it
\emph{spatially} in both forward and backward directions.  The
equations for spatial evolution form a \x2 system with varying
coefficients, while the entropy is smooth.  At an entropy jump, the
jump is resolved by the Hugoniot conditions \eq{contact}, which for a
$\gamma$-law gas is simply a \x2 linear map \eq{z_q_rel}.

In order to obtain global existence, we require only an \emph{a
  priori} $C^1$ estimate~\cite{li1,liyu,majda}.  That is, we obtain a
shock-free solution as long as $\alpha$ and $\beta$ remain bounded in
the half-plane $t>0$.  Taking the trace of the solution on $t=0$ then
yields non-trivial initial data which generates this nontrivial
interacting solution.  By finite speed of propagation, for any fixed
$x$, the solution will be stationary for $t$ large enough, so the
solution is eventually noninteracting.

Because of Lemma~\ref{pcni}(b), \emph{all} non-compressive shock-free
solutions with piecewise constant monotone entropy profile can be
generated in this way, and we expect that all such non-compressive
shock-free solutions have this structure as long as the entropy
profile is monotonic.

\paragraph{Rarefactions with two monotonic contacts}

By way of example, we explicitly construct an eventually
noninteracting solution consisting of rarefactions with two increasing
entropy jumps ($3$-contacts).  For convenience we set $m_{-\infty}=1$
and define the entropy profile by jumps $Q_0>1$ and $Q_1>1$ at $x_0$
and $x_1$, respectively, see~\eq{q def}.

We specify Cauchy data by $r(t,x_0-)$ and $s(t,x_0-)=0$, with 
\[
  \dot r(t,x_0-):=r_t(t,x_0-)>0, \com{supported on} t\in[0,T].
\]
This means that there is a simple backward rarefaction wave emerging
from $x_0-$, and no incoming forward wave.  Equivalently, by \eq{rs
  def}, we choose
\[
  z(t,x_0-) = Z(t), \quad u(t,x_0-) = - Z(t) \com{for} t\in[0,T],
\]
where $Z(t)$ is a positive monotone decreasing function.

By applying  Corollary~\ref{TYcor} at both jumps, it follows that all
waves in the solution are rarefactions, so that $\dot s\leq 0$ and
$\dot r\geq0$ everywhere.  In particular, the maximum and minimum
values of $z$ are taken on at $(x_0-,0)$ and $(x_1+,\infty)$,
respectively.  Since the solution is eventually noninteracting, by
\eq{q def}, it follows that we have the uniform global bounds
\beq
   Z_* \le z(x,t) \le Z^*,
\label{zbd}
\eeq
where the bounds are given by
\beq
   Z^* \ge Z(0) \com{and}
   Z_* \le Q_0^{\frac{-2}{d+1}}\,Q_1^{\frac{-2}{d+1}}\,Z(T).
\label{zstar}
\eeq

We recall Lax's estimate of gradient growth in the
$p$-system~\cite{lax2,lax3}.
\begin{lemma}
In a constant entropy field, the gradients of Riemann invariants of
$C^2$ solutions satisfy
\begin{align}
  \frac1{-\dot s(B)}&=\frac1{-\dot s(A)}\,R(A,B)
     + \int K(x,B)\,dx,\nn\\
  \frac1{\dot r(B)}&=\frac1{\dot r(A)}\,R(A,B)
	- \int K(x, B)\,dx,
\label{lax lemma}
\end{align}
where the integrations are along the forward and backward
characteristics connecting the points $A=(x_A,t_A)$ and $B=(x_B,t_B)$,
respectively, and where
\[
  R(A,B) := \left(\frac{z(A)}{z(B)}\right)^{d/2} \com{and}
  K(A,B) :=\frac{d\,R(A,B)}{2\,m^2\,z^{d+1}(A)}.
\]
\end{lemma}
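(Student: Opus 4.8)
The plan is to recast Lax's gradient identity in the $(m,z)$ variables by specializing the Riccati system of Lemma~\ref{remark} to the isentropic case and integrating each equation after an integrating‑factor substitution. In a constant entropy field $m_x\equiv 0$, so by \eq{k def} we have $k_2=0$ and $k_1=\tfrac d2\,z^{d-1}$, and \eq{rem12} collapses to
\[
  \alpha'=k_1\,\alpha\,(\beta-\alpha),\qquad
  \beta^\backprime=k_1\,\beta\,(\alpha-\beta).
\]
I will also use the identities $s_t=-c\,\alpha$ and $r_t=c\,\beta$ recorded just before Lemma~\ref{remark}, together with $c=m\,z^d$ from \eq{taupc} and $z=(s-r)/2m$ from \eq{rs def}; in particular $c$ is a function of $s-r$ alone.

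The key step is the observation that $\sqrt c$ is an integrating factor which removes the transverse term $k_1\,\alpha\beta$. Along a forward characteristic $s'=0$ by \eq{rs invariant}, while $r'=r_t+c\,r_x=2r_t=2c\,\beta$, so differentiating $c=c(s-r)$ along this characteristic gives $(\log\sqrt c)'=-k_1\,\beta$; combined with the first equation above this yields the genuinely scalar Riccati equation
\[
  \bigl(\sqrt c\,\alpha\bigr)'=-\,k_1\,c^{-1/2}\,\bigl(\sqrt c\,\alpha\bigr)^2 .
\]
The symmetric computation along a backward characteristic, where now $r$ is constant, gives $\bigl(\sqrt c\,\beta\bigr)^\backprime=-k_1\,c^{-1/2}\,\bigl(\sqrt c\,\beta\bigr)^2$.

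The remainder is bookkeeping. Setting $\mu:=\sqrt c\,\alpha=-\dot s/\sqrt c$, the scalar equation reads $(1/\mu)'=k_1\,c^{-1/2}$; integrating this along the forward characteristic from $A$ to $B$, changing the integration variable by $dt=dx/c$ along the characteristic, and dividing through by $\sqrt{c(B)}$ produces the first claimed identity, with the coefficient of $1/(-\dot s(A))$ equal to $\bigl(c(A)/c(B)\bigr)^{1/2}=\bigl(z(A)/z(B)\bigr)^{d/2}=R(A,B)$ and the integrand reducing to $K(x,B)$ after inserting $k_1=\tfrac d2 z^{d-1}$ and $c=m z^d$. The $\dot r$ identity follows in the same way from the $\beta$‑equation, the only change being that $dx=-c\,dt$ along a backward characteristic, which flips the sign in front of the integral.

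I do not expect a substantive obstacle. The one point that must be handled with care is the integrating factor $\sqrt c$ — it works precisely because $k_2$ vanishes in the isentropic case and because $c$ is transported trivially along the \emph{opposite} characteristic family (it depends only on $s-r$, and one of $s,r$ is constant on each family) — together with the sign conventions in passing between $s_x,r_x$ and $\dot s=s_t,\dot r=r_t$ and between a characteristic's time parameter and its $x$‑coordinate. One should also observe that the identity is asserted only where the $C^2$ solution exists, so there $c>0$ and $R,K,\sqrt c$ are all well defined.
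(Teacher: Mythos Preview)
Your argument is correct and is precisely Lax's classical derivation: reduce the isentropic Riccati system to a scalar one via the integrating factor $\sqrt c$, then integrate $(1/\mu)'=k_1c^{-1/2}$ along the characteristic and convert to an $x$-integral. The computations you sketch all check: with $d=\frac{\gamma+1}{\gamma-1}$ one has $k_1=\tfrac d2 z^{d-1}$, $(\log\sqrt c)'=-k_1\beta$ along forward characteristics, and the resulting integrand $k_1/(c^{3/2}\sqrt{c(B)})$ simplifies to $K(x,B)$ as stated.

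There is nothing to compare against: the paper does not supply a proof of this lemma but merely quotes it from \cite{lax2,lax3}. Your write-up is therefore an acceptable self-contained justification; it is in fact the same argument Lax gives, recast in the paper's $(m,z)$ coordinates.
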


For convenience we choose $x_1$ large enough (or $T$ small enough)
that the backward wave to the right of $x_0$ meets the entropy jump at
$x_1$ in negative time $t<0$, so that only the forward wave crosses
the second jump in our region of interest.  This is clearly possible
because we have uniform bounds \eq{zbd} for $z$, and hence for the
wavespeed.  For the same reason, the backward wave to the right of
$x_1$ meets the $t$-axis in some bounded interval $[x_1,X^*]$, so that
the support of the data for the corresponds initial value problem is
$[x_0,X^*]$.  By the mean value theorem, integrating the
characteristics from $(x_0+,T)$, we get
\begin{align}
  x_1 - x_0 &\ge \ol c_1\,T \nn\\
  x_1 - x_0 &= \ol c_2\,(T_*-T), \com{and}\label{xct}\\
  X_* - x_1 &= \ol c_3\,T_*, \nn
 \end{align}
for some values $\ol c$, where $T_*$ is the time at which the forward
characteristic from $(x_0+,T)$ meets $x_1$.  In fact, since the
forward wave is simple, $\ol c_2$ is exactly $c(z(T,x_0+))$.

It remains to show that the backward rarefactions focus in the
half-plane $t<0$.  It is convenient to describe the Riemann
invariants in terms of the data $Z(t)$, as follows.  First, note that
\[
   \dot r(x_0-,t)=-\dot s(x_0-,t)=-2\,\dot Z(t) \ge 0,
\]
so that, by \eq{rsjump},
\begin{align}
  \dot r(x_0+,t) &= -(1+Q_0)\,\dot Z(t) \com{and}\nn\\
  -\dot s(x_0+,t) &= -(Q_0-1)\,\dot Z(t).
\label{rsinit}
\end{align}

By construction, there are two cases to consider, namely, a backward
characteristic from $x_0+$, and a forward characteristic from $x_0+$
to $x_1-$ followed by a backward characteristic from $x_1+$.  For the
first case, we apply \eq{lax lemma} with $A=(x_0+,t)$, and $B=(x,0)$,
with $t\le T$ and $x_0\le x\le x_1$.  We require
\[
  \frac1{\dot r(B)}=\frac1{\dot r(A)}\,R(A,B)
	- \int K(x, B)\,dx > 0,
\]
which reduces to 
\[
  \dot r(A) < \frac{R(A,B)}{\int K(x, B)\,dx},
\]
and which clearly follows if
\beq
  -\dot Z(t) < \frac1{(x_1-x_0)\,(1+Q_0)}\,\frac{\min R}{\max K}.
\label{z1}
\eeq
Now consider a forward characteristic from $A$ to $C=(x_1,t_1)$,
followed by a backward characteristic from $C$ to $D=(x,0)$.  Since
$-\dot s\ge0$, \eq{lax lemma} implies that
\[
  -\dot s(C-) \le -\dot s(A)\,\frac1{R(A,C-)},
\]
and again applying \eq{rsjump}, we get
\begin{align*}
  \dot r(C+) &= -\dot s(C-)\,\frac{Q_1-1}2 
   \le -\dot s(A+)\,\frac{Q_1-1}{2\,R(A,C-)} \\
   &\le -\dot Z(t)\,\frac{(Q_1-1)\,(Q_0-1)}{2\,R(A,C-)}.
\end{align*}
As in the first case, the backward rarefaction focusses at $t<0$
provided
\[
  \dot r(C+) < \frac{R(C+,D)}{\int K(x, D)\,dx},
\]
which certainly holds if
\beq
  -\dot Z(t) < \frac2{(Q_1-1)\,(Q_0-1)}\,
		\frac{(\min R)^2}{(X_*-x_1)\,\max K}.
\label{z2}
\eeq

It follows that if \eq{z1} and \eq{z2} hold, then our solution
satisfies the required properties.  We now further estimate \eq{z1}
and \eq{z2}.  We fix the bounds $Z_*$ and $Z^*\ge Z(0)$ for the
data $Z(t)$, use \eq{xct} to eliminate $X_*-x_1$, and simplify, to
find constants  $K_i$ depending only on $Z_*$, $Z^*$ and $Q_i$, such
that, if
\beq
  -\dot Z(t) < \min\left\{\frac {K_1}{x_1-x_0},\ 
                          \frac{K_2}{x_1-x_0+\ol c_2\,T}\right\},
\label{zeq}
\eeq
then \eq{z1} and \eq{z2} hold.

Finally, we show consistency of the construction, as follows.  First,
fix $Z_*$, $Z^*$ and $Q_i$, and choose $x_0$, $x_1$ and $T$ such that
\[
  x_1-x_0 \ge C_1 \,T,
\]
where $C_1$ is an upper bound for the wave speed in the region
$(x_0,x_1)$.  Now choose $Z(t)$ such that \eq{zeq} holds \emph{and}
such that
\[
  Z^* \ge Z(0) \ge Z(T) \ge Q_0^{\frac2{d+1}}\,Q_1^{\frac2{d+1}}\,Z_*.
\]
It is evident that these conditions are consistent by further
requiring, say,
\[
  0 \le -\dot Z(t) \le 
    \frac{Z^*-Q_0^{\frac2{d+1}}\,Q_1^{\frac2{d+1}}\,Z_*}T.
\]

\section{Non-monotonic Contact Discontinuities}

We have shown that if the entropy profile is monotonic, then
shock-free solutions are eventually noninteracting or contain an
asymptotic vacuum.  Here, by example, we show that this need not be
true when the entropy profile is non-monotonic.  Our entropy profile
is piecewise constant with two contacts.  As in earlier sections, we
characterize the jumps using \eq{q def}.  Thus, we place a $3$-contact
$Q_0>1$ at $x_0$ and a $1$-contact $Q_1<1$ at $x_1$; without loss of
generality we assume $Q_0=Q=1/Q_1$.  We assume also that there are no
incoming waves on either side of the interaction region, so the
interactions are confined to the strip $x_0<x<x_1$.

We consider the initial value problem with $C^2$ data prescribed in
the interval $(x_0,x_1)$, and constant outside that interval.  We note
that if the data is anywhere compressive, shocks must form in finite
time, as the compressions cannot be cancelled at the entropy jumps.

\begin{lemma}
Fix the entropy profile as described above, and assume the initial
data is constant outside the interval $[x_0,x_1]$.  Then the solution
is globally shock-free if and only if the data is nowhere compressive.
\end{lemma}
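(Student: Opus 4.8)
The plan is to prove the two directions separately. The forward direction (globally shock-free $\Rightarrow$ data nowhere compressive) is the contrapositive of the remark already made in the text: if the data is compressive somewhere, a shock forms in finite time. I would spell this out using Corollary~\ref{TYcor} and Lemma~\ref{blowup_psystem}. The key point is that with a $3$-contact at $x_0$ and a $1$-contact at $x_1$, and no incoming waves from outside $[x_0,x_1]$, any compression is trapped: a backward $C$ can only be removed by crossing a forward wave of appropriate type at a contact, but by Corollary~\ref{TYcor} the reflection/cancellation requires the opposite wave to have matching $C$ character, which (tracing the genealogy of $C$ waves as in the proof of Lemma~\ref{noSF}) forces the appearance of a $C$ earlier in time — eventually contradicting that the data was the only source. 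So compressions cannot all be cancelled; an uncancelled $C$ either exits the strip (then Lemma~\ref{blowup_psystem} gives blowup) or is obstructed within an isentropic block (again blowup by Lemma~\ref{blowup_psystem}, exactly as in Lemma~\ref{noSF}).

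For the converse (data nowhere compressive $\Rightarrow$ globally shock-free) I would argue that the $R$ character is preserved globally, so the Riccati equations \eq{rem12} never drive $\alpha$ or $\beta$ to $-\infty$, and then rule out blowup to $+\infty$ by a Lax-type estimate controlled by the global bounds on $z$. First I would show that every wave stays rarefactive: in each isentropic block, $R/C$ character is preserved along characteristics by Lemma~\ref{RCpsystem}; at the $3$-contact $Q_0>1$, a forward/backward $R$ can only become $C$ if the crossing opposite wave is already $C$, which never happens since there are no $C$ waves to start and (inductively in time) none can be created; the same reasoning with the roles adjusted applies at the $1$-contact $Q_1<1$ via the symmetric clause of Corollary~\ref{TYcor}. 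Hence $\alpha\ge 0$ and $\beta\ge 0$ everywhere, so the solution is nowhere compressive for all time.

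Given that $\alpha,\beta\ge0$ globally, I would establish the a priori bound needed for global existence (the $C^1$ estimate cited via \cite{li1,liyu,majda}). Because all waves are rarefactions and the interactions are confined to $[x_0,x_1]$ with $Q_0 = 1/Q_1$, the variable $z$ is bounded above and below away from $0$ and $\infty$ — the entropy jumps exactly cancel over a round trip, so repeated reflections cannot push $z$ to its extremes monotonically (this is where the specific relation $Q_0=1/Q_1$ is used, and is essentially the content worked out in the next section). With $z$ uniformly bounded, $k_1$ and $k_2$ in \eq{k def} are bounded, and the Lax estimate (Lemma, equation \eq{lax lemma}) applied along characteristics between consecutive contact crossings — combined with the linear map \eq{rsjump} at each crossing — shows $1/\alpha$ and $1/\beta$ stay bounded below by a positive quantity, so $\alpha,\beta$ stay bounded above. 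By \eq{alpha beta blowup}, $u_x$ and $p_x$ never blow up, so the solution is globally shock-free.

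The main obstacle is the converse direction, specifically the uniform two-sided bound on $z$ under infinitely many reflections between the two contacts: unlike the monotone case, one cannot simply say that the extreme values of $z$ are attained at the boundary of the support. I would need to exploit $Q_0Q_1=1$ to see that a forward wave crossing first $x_0$ (multiplied by $Q_0^{-2/(d-1)}$ in $z$) and later $x_1$ (multiplied by $Q_1^{-2/(d-1)} = Q_0^{2/(d-1)}$) returns to a comparable range, and to track this carefully enough across reflections to get a bound independent of how many reflections occur. This is precisely the bookkeeping carried out in detail in the following section, so here I would either invoke that analysis or give the one-round-trip estimate and iterate.
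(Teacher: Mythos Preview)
Your forward direction is fine and matches the paper's one-line remark before the lemma. The converse, however, contains a genuine gap, and your diagnosis of ``the main obstacle'' is based on a false expectation.

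You claim that $z$ is uniformly bounded above and below away from $0$ and $\infty$, arguing that ``the entropy jumps exactly cancel over a round trip'' because $Q_0Q_1=1$, and you say this is ``essentially the content worked out in the next section.'' But the next section shows the opposite: the recursion \eq{zQ} solves to \eq{zsoln}, and when $\zeta=z_1-\eta z_0\le 0$ the sequence $z_n$ tends to $0$ (or would become negative, signalling an asymptotic vacuum). So $z$ is \emph{not} bounded away from vacuum uniformly in time, and the Lax estimate \eq{lax lemma} with uniform coefficients cannot be iterated across infinitely many reflections in the way you sketch. Your picture of a forward wave ``crossing first $x_0$ and later $x_1$'' is also off: the waves reflect at the contacts, they do not transmit through both in sequence, and the relevant cancellation is not a simple product $Q_0Q_1$ on $z$.

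The paper avoids this entirely. It recasts the problem as an initial--boundary value problem on the strip $\Omega=\{x_0<x<x_1,\ t>0\}$, derives the \emph{linear} boundary conditions \eq{rsbc} from the no-incoming-wave requirement via \eq{rsjump}, observes from Corollary~\ref{TYcor} that all reflected waves are rarefactive (so $\alpha,\beta\ge0$ globally, as you also argue), and then invokes the global $C^1$ existence theorem of \cite{li1}, Chap.~6, Thm.~3.1. That theorem needs only \emph{time-dependent} a priori bounds, not uniform ones; the paper notes these come from the explicit computation that follows. The solution is then extended outside $\Omega$ as simple rarefactions. Your approach could perhaps be repaired by settling for time-dependent bounds on $z$ and $\alpha,\beta$ on each finite interval $[0,T]$, but as written the uniform-bound claim is false and the argument does not close.
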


\begin{proof}
It suffices to show that rarefactive data produces global shock-free
solutions.  We first solve the initial boundary value problem in the
region
\[
  \Omega = \{(x,t)\,|\,x_0<x<x_1,\ t>0\},
\]
with boundary data prescribed by the requirement that there are no
incoming waves.  We then resolve the states across the entropy jumps
and propagate the solution outwards as simple (rarefaction) waves.

We obtain the boundary conditions by setting
\[
  s(x_0-,t)=s(x_0-,0) \com{and} r(x_1+,t)= r(x_1+,0),
\]
and solving the Hugoniot conditions \eq{rsjump}.  After
simplification, we write the boundary conditions as
\begin{align}
  s(x_0+,t) &= \frac{1-Q}{1+Q}\,r(x_0+,t)+\frac{2\,Q}{1+Q} \,s(x_0-,0), \nn\\
  r(x_1-,t) &= \frac{1-Q}{1+Q}\,s(x_1-,t)+\frac{2\,Q}{1+Q} \,r(x_1+,0).
\label{rsbc}
\end{align}

According to Corollary~\ref{TYcor}, the waves that are reflected back
into the domain are always rarefactive, so we obtain global bounds on
the derivatives of Riemann invariants.

We now apply the existence theorem of \cite{li1}, Chap.~6, Thm.~3.1,
which states that there is a unique global $C^1$ solution in $\Omega$
which is nowhere compressive, provided \emph{a priori} bounds are
satisfied.  We obtain explicit time-dependent bounds below, which
suffice for application of the theorem.

Finally, having solved the IBVP inside the domain $\Omega$, we again
apply \eq{rsjump} to obtain the Riemann invariants outside the domain,
which are rarefactions by construction.  These thus propagate for all
times without forming shocks.
\end{proof}

It is clear that these solutions are \emph{not} eventually
noninteracting, as there is always reflected rarefaction.  It follows
that either the solution forms an asymptotic vacuum, or waves continue
to reflect back and forth between $x_0$ and $x_1$ for all times.

\begin{figure}[htp] \centering
\begin{tikzpicture}[scale=0.4]
\draw[->] (-3.5,0) -- (-3.5,2);
\node at (-3.7,1.8) {$t$};
\draw[->] (-3.5,0) -- (-1.5,0);
\node at (-1.7,-0.3) {$x$};
\foreach \xx in {1,4}
  \draw[dashed] (\xx,0) -- (\xx,5.45);
\foreach \y/\s in {0/1,3.2/1.5}
  \draw (4,\y) .. controls (3,\y+\s*0.3) and (2,\y+\s*0.5) .. (1,\y+\s*1.5);
\draw (1,1.5) .. controls (2,1.9) and (3,2.5) .. (4,3.2);
\node at (1,-0.3) {$x_0$};
\node at (4,-0.3) {$x_1$};
\node at (0.6,1.5) {$t_1$};
\node at (4.4,3.2) {$t_2$};
\node at (0.6,5.4) {$t_3$};
\end{tikzpicture}
  \caption{Infinite reflection of characteristics}
\label{nonmonoexample}
\end{figure}
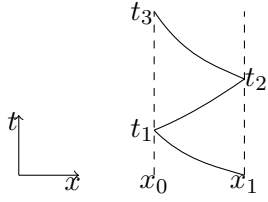

We analyze a general rarefactive solution, as follows.  Starting from
the corner $(x_1,0)$, we trace the reflected characteristic through
the solution.  Let $t_k$ denote the time of the $k$-th intersection of
this characteristic with the boundary, as in
Figure~\ref{nonmonoexample}, and use the subscript to denote the
corresponding interior state, so $z_{2k} = z(x_1-,t_{2k})$ and
$z_{2k+1} = z(x_0+,t_{2k+1})$.  We use \eq{simple} to describe the
states at either end of the backward and forward characteristics,
respectively, by
\begin{align*}
  u_{2k} - u_{2k+1} &= m\,(z_{2k}-z_{2k+1}),\\
  u_{2k} - u_{2k-1} &= m\,(z_{2k-1}-z_{2k}).
\end{align*}
Since the waves outside $\Omega$ are simple, we obtain
\begin{align*}
  u_{2k+2} - u_{2k} &= \frac mQ\,(z_{2k+2}-z_{2k}),\\
  u_{2k+1} - u_{2k-1} &= \frac mQ\,(z_{2k-1}-z_{2k+1}),
\end{align*}
where we have used \eq{z_q_rel} to express the outside states in terms
of the interior states.  Eliminating $u_n$, we get the same equation
for even and odd $n$,
\beq
  (1+Q)\,z_{n+1} = 2\,Q\,z_{n} + (1-Q)\,z_{n-1},
\label{zQ}
\eeq
a linear difference equation for $z_n$.  We obtain a linear equation
because the simple wave description \eq{simple} and the jump
conditions \eq{z_q_rel} are linear in $u$ and $z$, for $m$ constant.

Setting $z_n=\lambda^n$, we get
\[
  \lambda^2 - (1+\eta)\,\lambda + \eta 
  = (\lambda-1)\,(\lambda-\eta) = 0,
\]
where we have set
\[
  \eta := \frac{Q-1}{Q+1} \in (0,1).
\]
It follows that the general solution of \eq{zQ} is
\beq
  z_n = \frac{z_1-\eta\,z_0}{1-\eta} + \eta^n\,\frac{z_0-z_1}{1-\eta}.
\label{zsoln}
\eeq

\begin{thm}
For entropy profile as given above, and rarefactive initial data
prescribed on $(x_0,x_1)$, there are three possible long-time
behaviors: asymptotic vacuum; infinitely reflected waves which
converge to the vacuum state; and infinitely reflected waves with
non-vanishing wavespeed and density.  The long-time behavior is
determined by the expression $\zeta = z_1-\eta\,z_0$, with a
bifurcation at $\zeta=0$.
\end{thm}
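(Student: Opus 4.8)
The strategy is to read the three regimes directly off the explicit solution \eq{zsoln} of the difference equation \eq{zQ}, using two facts established above. First, by the preceding lemma the solution is globally shock-free and nowhere compressive in the strip $x_0<x<x_1$, so that $\dot r\ge 0$ and $\dot s\le 0$ there; moreover, since the data contains no vacuum, a vacuum is not taken on in finite time, so the reflected characteristic $\Gamma$ issuing from the corner $(x_1,0)$ — with reflection times $t_0=0<t_1<t_2<\cdots$ — is a genuine curve defined for all $t\ge0$. Second, along $\Gamma$ the variable $z$ is monotone non-increasing: on a forward leg of $\Gamma$ the invariant $s=u+mz$ is constant while the derivative of $r$ along $\Gamma$ equals $2\dot r\ge0$, and on a backward leg $r=u-mz$ is constant while the derivative of $s$ along $\Gamma$ equals $2\dot s\le0$, so in either case $z=(s-r)/(2m)$ decreases. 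In particular $z_1\le z_0$, hence the sequence $z_n$ of \eq{zsoln} is non-increasing, with $z_n\downarrow \zeta/(1-\eta)$ as $n\to\infty$ (recall $\eta\in(0,1)$); everything is governed by the sign of $\zeta=z_1-\eta z_0$.

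The one quantitative ingredient is a comparison of the reflection times $t_n$ with the states $z_n$: between consecutive contacts $\Gamma$ crosses the strip of $x$-width $x_1-x_0$ with Lagrangian speed $c=m\,z^d$ (see \eq{taupc}), and $z$ runs monotonically from $z_n$ down to $z_{n+1}$, so
\[
   \frac{x_1-x_0}{m\,z_n^{\,d}}\ \le\ t_{n+1}-t_n\ \le\ \frac{x_1-x_0}{m\,z_{n+1}^{\,d}}.
\]
If $\zeta>0$, then $z_n$ stays trapped between the positive constants $\zeta/(1-\eta)$ and $z_0$, hence so do the wavespeeds and the gaps $t_{n+1}-t_n$; therefore $t_n\to\infty$, $\Gamma$ reflects infinitely often, and $z_n$ — hence the density and the wavespeed — converges to a strictly positive limit. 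If $\zeta=0$, then \eq{zsoln} reduces to $z_n=\eta^n z_0\to 0$, so the interior states converge to vacuum; but now $t_{n+1}-t_n\ge (x_1-x_0)\,\eta^{-nd}/(m\,z_0^{\,d})$ grows geometrically, so $\sum_n(t_{n+1}-t_n)=\infty$ and $\Gamma$ still reflects infinitely often over all of $[0,\infty)$, returning to both $x_0$ and $x_1$: there is no vertical asymptote, only convergence to the vacuum state.

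If $\zeta<0$, then $z_0>z_1$ strictly, so \eq{zsoln} eventually forces $z_n\le 0$, which is impossible. Hence $\Gamma$ makes only finitely many complete crossings — were it to reflect forever, \eq{zQ} would hold for every $n$ and give $z_n\to\zeta/(1-\eta)<0$, contradicting $z>0$. After its last reflection $\Gamma$ enters the strip, is monotone in $x$, hence converges to some $x_*\in[x_0,x_1)$; and $z$ decreases along $\Gamma$ to a limit $\ell\ge0$, where $\ell>0$ would make $c=m z^d\ge m\ell^d>0$ and force $\Gamma$ to reach the opposite boundary in finite time, a contradiction. So $\ell=0$, $c\to 0$ along $\Gamma$, and $x=\Gamma(t)$ has the vertical asymptote $x=x_*$: an asymptotic vacuum. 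The three cases are mutually exclusive and exhaustive (a trichotomy on the sign of $\zeta$, bifurcating at $\zeta=0$), and each is realized, since the ratio $z_1/z_0$ is at our disposal through the $C^2$ data prescribed on $(x_0,x_1)$. I expect the $\zeta<0$ case to be the main obstacle: upgrading the algebraic statement ``\eq{zsoln} runs out of positive values'' to the geometric statement ``$\Gamma$ has a vertical asymptote'' is precisely where global shock-free existence, the no-finite-time-vacuum property and the monotonicity of $z$ along $\Gamma$ must be combined — and it is the wavespeed estimate above, converting decay of $z_n$ into growth (or not) of the gaps $t_{n+1}-t_n$, that cleanly separates this case from the $\zeta=0$ one.
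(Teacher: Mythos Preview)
Your proposal is correct and follows essentially the same approach as the paper: read off the trichotomy from the explicit solution \eq{zsoln}, using rarefactivity to get $z_1\le z_0$, and in the case $\zeta<0$ infer that the reflected characteristic fails to reach the next boundary and hence asymptotes. Your write-up is in fact more detailed than the paper's own proof --- you supply the monotonicity of $z$ along $\Gamma$, the wavespeed comparison $ (x_1-x_0)/(m z_n^d)\le t_{n+1}-t_n\le (x_1-x_0)/(m z_{n+1}^d)$ (which the paper defers to the following corollary), and the explicit argument that $z\to 0$ along the final leg in the $\zeta<0$ case --- whereas the paper simply asserts that $z_N$ being undefined means the $N$-th leg never meets the boundary and ``therefore'' there is an asymptotic vacuum.
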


\begin{proof}
Since the data is rarefactive, $z_1<z_0$, so the second term of
\eq{zsoln} is positive and decreasing with $n$.  If $\zeta>0$, then
$z_n$ is defined for all $n$ and approaches $\zeta/(1-\eta)>0$ as
$n\to\infty$.  This implies that the waves are reflected infinitely
often with uniformly bounded wavespeed.

If $\zeta = 0$, then $z_n=\eta^n\,z_0$, which clearly converges to the
vacuum as $n\to\infty$.  Moreover, since $z_n$ is defined for all $n$,
the waves interact with the entropy jumps infinitely often, and the
$n$-th characteristic has nonzero speed, so the solution does
\emph{not} contain an asymptotic vacuum.

Finally, if $\zeta<0$, then for some $N$, \eq{zsoln} yields $z_N<0$,
which contradicts the physical requirement that $z>0$.  We conclude
that $z_N$ cannot be defined, which means that the $N$-th
characteristic never meets the boundary, and we therefore have an
asymptotic vacuum.
\end{proof}

\begin{cor}
Solutions with nonmonotic entropy may converge to vacuum as
$t\to\infty$ even though they contain no asymptotic vacuum.  In
particular, the condition of an asymptotic vacuum is stronger than the
vacuum condition \eq{vacuum2}.
\end{cor}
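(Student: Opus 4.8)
The first assertion is the $\zeta=0$ case of the preceding Theorem, read together with the definition of an asymptotic vacuum. There $z_n=\eta^n z_0\to0$, so $\tau\to\infty$ and $\rho\to0$: the solution converges to the vacuum state as $t\to\infty$. Yet $z_n>0$ for every $n$, so the reflected characteristic meets the boundary at each of the infinitely many times $t_n$ and never settles onto a vertical line; by the Theorem there is no asymptotic vacuum. Hence this nonmonotone entropy profile carries a shock-free solution that develops a vacuum in the limit but contains no asymptotic vacuum.

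For the second assertion, recall that an asymptotic vacuum forces the Vacuum Condition \eq{vacuum2}, so it remains to exhibit a shock-free solution for which \eq{vacuum2} holds but which has no asymptotic vacuum, and I claim the $\zeta=0$ solutions above qualify. The plan is to evaluate the four extreme quantities in \eq{vacuum2} for these solutions. Since the data is constant outside $[x_0,x_1]$ and $u$ is continuous across contacts, $u_0(\infty)-u_0(-\infty)=u(x_1-,0)-u(x_0+,0)$, while the jump relation \eq{z_q_rel}, with the two jumps $Q$ and $1/Q$ and $m$ the interior entropy level, gives
\[
  m(-\infty)\,z_0(-\infty)=\tfrac mQ\,z(x_0+,0), \qquad
  m(\infty)\,z_0(\infty)=\tfrac mQ\,z_0,
\]
where $z_0=z(x_1-,0)$. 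One then transports these traces to the first crossing of the reflected characteristic using \eq{rsjump} and the relations behind \eq{zQ}; after substitution and using $\eta=(Q-1)/(Q+1)$, the inequality \eq{vacuum2} reduces (after a short computation) to the single condition $\zeta\le0$. Thus \eq{vacuum2} holds exactly in the asymptotic-vacuum range $\zeta<0$ (consistent with the general implication) together with the boundary case $\zeta=0$; since $\zeta=0$ produces no asymptotic vacuum, the two conditions genuinely differ, which is the assertion.

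The only real work is the reduction of \eq{vacuum2} to $\zeta\le0$: one must follow the reflected characteristic through the two contacts and keep the $Q$-factors of \eq{z_q_rel} straight as they enter $m(\pm\infty)z_0(\pm\infty)$ and $u_0(\pm\infty)$. This bookkeeping is eased by the monotonicity $z(x_0+,0)\ge z(x_0+,t_1)$, a consequence of $z_t=(s_t-r_t)/(2m)\le0$ for non-compressive solutions. If one prefers to avoid the computation, a softer route suffices: for $\zeta<0$ the solution has an asymptotic vacuum and hence satisfies \eq{vacuum2}; the condition \eq{vacuum2} is closed in the data, and the extreme states $u_0(\pm\infty),z_0(\pm\infty)$ are determined from the data through finitely many jump relations and the smooth $p$-system flow, hence depend continuously on the bifurcation parameter, so \eq{vacuum2} passes to the limit $\zeta\to0^-$. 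Everything else is a direct appeal to the preceding Theorem and to the fact, noted in the introduction, that an asymptotic vacuum implies \eq{vacuum2}.
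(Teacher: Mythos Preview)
Your approach is essentially the paper's: exhibit the $\zeta=0$ solution from the preceding Theorem and verify \eq{vacuum2} for it by direct computation. The paper carries that computation out explicitly, working at the point $(x_0,t_1)$ rather than at time $0$: using the backward characteristic from $(x_1,0)$ to $(x_0,t_1)$ together with the forward-invariant relation $s(x_0-,t_1)=s_0(-\infty)$ on the exterior, one finds
\[
  u_0(\infty)-u_0(-\infty)-m(-\infty)z_0(-\infty)-m(\infty)z_0(\infty)
  = -\tfrac{m}{Q}(Q+1)\,\zeta,
\]
so \eq{vacuum2} is in fact equivalent to $\zeta\le0$, confirming your stronger claim and giving equality at $\zeta=0$. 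Your outline is correct but you should note that the time-zero trace $z(x_0+,0)$ cancels in this identity, so the monotonicity $z(x_0+,0)\ge z_1$ you invoke is not needed; the computation is exact, not an inequality.

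One genuine caution about your ``softer'' continuity route: it rests on the implication \emph{asymptotic vacuum $\Rightarrow$ \eq{vacuum2}}, which in the paper is proved only for monotone piecewise constant entropy (Lemma~\ref{AV}) and for a single contact (Lemma~\ref{lemma single contact}). No proof is given for the nonmonotone two-contact profile under discussion, so you cannot invoke it here without supplying a separate argument. The direct computation is therefore not optional in this setting; fortunately it is short and you have correctly identified all its ingredients.
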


\begin{proof}
We have seen that $\zeta=0$ yields a vacuum in the limit as
$t\to\infty$ for all $x_0\le x\le x_1$.  We show that in case
$\zeta=0$, \eq{vacuum2} holds as an equality.  By \eq{z_q_rel}, at
$x_1$ and $x_0$, respectively, we have
\[
  u_0(\infty) = u(x_1-,0+), \quad
  m(\infty)\,z_0(\infty) = \frac mQ\,z_0
\]
and
\[
  u(x_0-,t_1) = u(x_0+,t_1),\quad
  m\,z_1 = Q\,m(-\infty)\,z(x_0-,t_1),
\]
where $z_0=z(x_1-,0+)$ and $z_1=z(x_0+,t_1)$.  Now, by \eq{simple} we
also have
\begin{gather*}
  u(x_0-,t_1)-u_0(-\infty) =
  m(-\infty)\,(z_0(-\infty)-z(x_0-,t_1)),\\
  u(x_1-,0+)-u(x_0+,t_1) = m\,(z_0-z_1).
\end{gather*}
Now, $m(-\infty)=m(\infty)$, and we calculate
\begin{align*}
  u_0(\infty)-u_0(-\infty) &=
    m(\infty)\,(z_0(-\infty)- z_0(x_0-,t_1)) + m\,(z_0-z_1)\\
   &= m(\infty)\,z_0(-\infty) 
    -\frac mQ\,z_1-m\,z_1+Q\,m(\infty)\,z_0(\infty),
\end{align*}
which yields
\begin{align*}
  u_0(\infty)-u_0(-\infty)
    &-m(-\infty)\,z_0(-\infty)-m(\infty)\,z_0(\infty)\\
  &= -\frac mQ\,z_1-m\,z_1+(Q-1)\,\frac mQ\,z_0.
\end{align*}
This vanishes since $z_1 = \eta\,z_0 = \frac{Q-1}{Q+1}\,z_0$.
\end{proof}

\begin{cor}
At the bifurcation point $\zeta=0$, the solution asymptotically
approaches vacuum at the rate
\[
   z_0\,\left(1+\frac{t_n\,c_0}{x_1-x_0}\,(\eta^{-d}-1)\right)^{-1/d}
   \le z_n \le
   z_0\,\left(1+\frac{t_n\,c_0}{x_1-x_0}\,(1-\eta^d)\right)^{-1/d},
\]
so, in particular, $z(t) \sim O(1)\,(1+t)^{-1/d}$, which implies
\[
  \tau = O(1)\,(1+t)^{1-1/d}, \com{or} \rho = O(1)\,(1+t)^{1/d-1}.
\]
\end{cor}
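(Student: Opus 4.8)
The plan is to combine the explicit sequence $z_n=\eta^n\,z_0$ --- the $\zeta=0$ case of \eq{zsoln}, obtained in the proof of the preceding theorem --- with a sufficiently sharp estimate of the reflection times $t_n$, got by integrating $dt=dx/c$ across the strip $x_0<x<x_1$ along each leg of the reflected characteristic of Figure~\ref{nonmonoexample}.

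First I would establish a two-sided bound for $z$ along each leg. Inside the strip the entropy variable $m$ is constant, so \eq{rs invariant} and \eq{bkfwd} give $s_x=-s_t/c$ along forward characteristics and $r_x=r_t/c$ along backward ones, whence $s^\backprime=2\,s_t$ and $r^\prime=2\,r_t$. Since the solution is nowhere compressive, $\alpha\ge0$ and $\beta\ge0$, so by the relations $s_t+c\,\alpha=0$, $r_t-c\,\beta=0$ following Definition~\ref{def2} we have $s_t\le0$ and $r_t\ge0$. Hence along the reflected characteristic $s$ is non-increasing on backward legs, $r$ is non-decreasing on forward legs, and therefore $z=(s-r)/(2m)$ is non-increasing throughout. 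Because $u$ and $p$ are $C^2$, the trace of $z$ on $\overline{\Omega}$ is continuous, hence continuous through each reflection corner at $x_0$ and $x_1$; consequently, on the $k$-th leg (which joins the boundary values $z_k$ and $z_{k+1}$) one has $z_{k+1}\le z\le z_k$. Writing $c_0:=m\,z_0^{\,d}$ for the sound speed of the interior initial state $z_0=z(x_1-,0)$ and using $c=m\,z^d$ from \eq{taupc}, the travel time $\Delta_k:=t_{k+1}-t_k=\int_{x_0}^{x_1}dx/c$ of the $k$-th leg then satisfies
\[
  \frac{x_1-x_0}{c_0}\,\eta^{-kd}\;\le\;\Delta_k\;\le\;\frac{x_1-x_0}{c_0}\,\eta^{-(k+1)d},
\]
since $z_k=\eta^k\,z_0$.

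Next I would sum. With $t_0=0$ and $t_n=\sum_{k=0}^{n-1}\Delta_k$, the geometric series give
\[
  \frac{x_1-x_0}{c_0}\cdot\frac{\eta^{-nd}-1}{\eta^{-d}-1}\;\le\;t_n\;\le\;\frac{x_1-x_0}{c_0}\cdot\frac{\eta^{-nd}-1}{1-\eta^{d}},
\]
and substituting $\eta^{-nd}=(z_0/z_n)^d$ and solving each inequality for $z_n$ produces exactly the asserted bound
\[
  z_0\Bigl(1+\tfrac{t_n\,c_0}{x_1-x_0}(\eta^{-d}-1)\Bigr)^{-1/d}\;\le\;z_n\;\le\;z_0\Bigl(1+\tfrac{t_n\,c_0}{x_1-x_0}(1-\eta^{d})\Bigr)^{-1/d}.
\]
Both sides are $O(1)\,t_n^{-1/d}$ as $t_n\to\infty$, and since $z$ on the $k$-th leg differs from $z_n$ by at most the bounded factor $\eta^{\pm d}$, this gives $z(t)=O(1)\,(1+t)^{-1/d}$; the conversions $\tau=z^{1-d}/(d-1)$ and $\rho=1/\tau$ from \eq{taupc} then yield $\tau=O(1)\,(1+t)^{1-1/d}$ and $\rho=O(1)\,(1+t)^{1/d-1}$.

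The step I expect to be most delicate is the monotonicity and continuity claim: one must verify that $z$ cannot overshoot the endpoint values $z_k,z_{k+1}$ along a leg --- this is precisely where the non-compressive hypothesis and the isentropy of $\Omega$ enter --- and that the trace of $z$ is genuinely continuous across the reflection corners, so that the sequence $z_k$ of \eq{zQ} supplies the correct integration limits. Once that is in hand, the travel-time integral, the geometric summation, and the inversion for $z_n$ are all routine.
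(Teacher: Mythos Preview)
Your proposal is correct and follows essentially the same route as the paper's own proof: bound the travel time of each leg via $\frac{x_1-x_0}{c(z_k)}\le t_{k+1}-t_k\le\frac{x_1-x_0}{c(z_{k+1})}$, sum the resulting geometric series for $z_k=\eta^k z_0$, and invert. The paper simply asserts the monotonicity of $z$ along characteristics in one phrase, whereas you spell out the argument from the non-compressive hypothesis and the constancy of the Riemann invariants; this is extra care, not a different approach.
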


\begin{proof}
Since $z$ is monotonic along characteristics, we estimate the
interaction times $t_n$ by
\[
  \frac{x_1-x_0}{c(z_n)} \le t_{n+1} - t_n \le
  \frac{x_1-x_0}{c(z_{n+1})},
\]
so that
\[
  \frac{x_1-x_0}m\,\sum_{k=0}^{n-1} z_k^{-d} \le t_n
    \le \frac{x_1-x_0}m\,\sum_{k=1}^{n} z_k^{-d}.
\]
If $\zeta>0$, then the terms $z_k^{-d}$ converge to
$\kappa=(1-\eta)^d/\zeta^d\ne0$, so the times do not converge and
$t_n\sim \kappa\,n$ for $n$ large.

When $\zeta = 0$, we get
\[
  \frac{x_1-x_0}{m\,z_0^d}\,\sum_{k=0}^{n-1} \eta^{-kd} \le t_n
    \le \frac{x_1-x_0}{m\,z_0^d}\,\sum_{k=1}^{n} \eta^{-kd},
\]
which simplifies to
\[
  \frac{\eta^{-nd}-1}{\eta^{-d}-1} \le \frac{t_n\,m\,z_0^d}{x_1-x_0}
     \le \eta^{-d}\,\frac{\eta^{-nd}-1}{\eta^{-d}-1},
\]
which in turn yields
\[
  1+\frac{t_n\,c_0}{x_1-x_0}\,(1-\eta^d) \le \eta^{-nd}
  \le 1+\frac{t_n\,c_0}{x_1-x_0}\,(\eta^{-d}-1),
\]
where $c_0=c(z_0)=m\,z_0^d$.  Since $z_n = \eta^n\,z_0$, the corollary
follows.
\end{proof}

\section{Solutions with One Shock}

We briefly analyze the evolution of a single shock, as follows.  The
shock curve and states on either side satisfy the Rankine-Hugoniot
relations \eq{RH}.  We analyze the interacting shock by treating it as
a free boundary problem and imposing the conditions that the flow on
either side of the shock is either isentropic or stationary.  This
yields a nontrivial interacting shock which is however not difficult
to analyze.

We parameterize the shock curve, $\Sigma = (x(a),t(a))$, together with
Cauchy data on either side of $\Sigma$, subject to the
Rankine-Hugoniot conditions.  According to \eq{PWmf} and \eq{PWgh},
we have
\begin{align}
    \frac{z_1}{z_0} &= a, \nn\\
    \frac{m_1}{m_0} &= f(a), \nn\\
    \frac{p_1}{p_0} &= a^{d+1}\,f(a)^2, \nn\\
    \xi &= \pm \ol m\,\ol z^d\,g(a), \label{shock}\\
    u_1 - u_0 &= \pm \ol m\,\ol z\,h(a), \nn\\\nn
    t(a) - t_0 &= \pm \int_0^a 
      \frac{\dot x(\tilde a)}{\ol m\,\ol z^d\,g(\tilde a)}\;d\tilde a,
\end{align}
where
\[
   \ol m = m_0\,\frac{1+f(a)}2, \quad 
   \ol z = \left[\frac{(d+1)\,p_1}{m_0^2}\right]^{1/(d+1)}
\]
are the averages, and subscripts refer to opposite sides of the shock;
here the parameter satisfies $1<a<d^{1/(d-1)}$.

Once we specify consistent Cauchy data, we can extend the solution to
a neighborhood around the shock by solving the Cauchy problem locally
on either side of the shock.  Because we implicitly assume the Lax
entropy conditions, the Cauchy problem is non-characteristic as long
as the shock has nonzero strength, and so general existence theorems
apply~\cite{liyu,li1,Dafermos}.  Moreover, if we establish \emph{a
  priori} estimates for gradients, global existence follows.

We specify the Cauchy data by assuming that the flow on one
side of the shock is stationary, while the flow on the other side is
isentropic.  We do this by specifying that
\beq
  u_1(a) = U_1,\quad p_1(a) = P_1, \com{and} m_0(a) = M_0,
\label{CP}
\eeq
so that $1$ refers to the stationary solution and $0$ to the
isentropic solution.  This leaves a single free function, namely
$x(a)$, all other quantities being determined by \eq{shock}, \eq{CP}.

\begin{lemma}
There are globally defined interacting solutions containing one shock
which separates the plane into two regions, such that the solution is
isentropic in one region and stationary in the other.
\end{lemma}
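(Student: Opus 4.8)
The plan is to exploit the reduction already in place: once the shock location $x(a)$ is prescribed as a function of the strength parameter $a\in(1,d^{1/(d-1)})$, the Rankine--Hugoniot relations \eq{shock} together with the choices \eq{CP} determine every other quantity along the shock curve $\Sigma$. Indeed, from $p_1\equiv P_1$ and $m_0\equiv M_0$ one gets $p_0(a)=P_1/(a^{d+1}f(a)^2)$ and, via \eq{taupc}, the trace $z_0(a)$; then $z_1=a\,z_0$, $m_1=f(a)\,M_0$, $u_0(a)=U_1\mp\ol m\,\ol z\,h(a)$, and $t(a)$ is recovered by the quadrature in the last line of \eq{shock}. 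I would then fill the \emph{stationary} side of $\Sigma$ with the exact time-independent solution \eq{station}, taking $U=U_1$, $P=(d+1)\,P_1$, and the entropy profile $m(\cdot)$ to be any smooth function with $m(x(a))=f(a)\,M_0$ along $\Sigma$; by \eq{station} this solves \eq{lag} for all time with no estimates required, and its trace on $\Sigma$ matches the data above by construction. On the \emph{isentropic} side there remains a Cauchy problem for the $p$-system \eq{psys} with data $(z_0(a),u_0(a),M_0)$ on $\Sigma$; since the Lax entropy inequalities force $\xi\ne\pm c_0$ (and $\xi\ne\pm c_1$) whenever $a\ne1$, the curve $\Sigma$ is non-characteristic and standard local existence applies on either side of it.

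Next I would choose the free data $x(a)$ (and the constants $U_1,P_1,M_0$) so that the solution extends to all time. Fix a compact subinterval $[a_0,a_\ast]\subset(1,d^{1/(d-1)})$ bounded away from the zero-strength value $a=1$, set $t(a_0)=0$, and take $x(a)$ monotone with $|x'(a)|\to\infty$ fast enough as $a\to a_\ast^-$ that the quadrature for $t(a)$ diverges. Then $\Sigma$ is a smooth graph $x=X(t)$, $t\in[0,\infty)$; the shock strength stays in $[a_0,a_\ast]$; and $c_0,c_1,z_0,z_1,m_1$ are bounded above and below by positive constants on $\Sigma$. In particular the isentropic flow can reach neither vacuum nor concentration in finite time, so by the usual continuation principle it fails to be global only if its gradients blow up.

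To rule that out I would use the Lax gradient estimates \eq{lax lemma} together with Lemmas~\ref{RCpsystem} and~\ref{blowup_psystem}: the isentropic flow develops a second shock in finite time precisely when it is somewhere compressive, and its $R/C$ character is transported along characteristics. So it suffices to arrange that the waves $\Sigma$ sheds into the isentropic region are non-compressive, i.e.\ $\alpha\ge0$ and $\beta\ge0$ along $\Sigma$ in the sense of Definition~\ref{def2}. Differentiating $(z_0(a),u_0(a))$ along $\Sigma$ and solving for $u_x,z_x$ by means of \eq{p zm} (noting that $t'(a)$ is a fixed multiple of $x'(a)$) shows that the signs of $\alpha,\beta$ on $\Sigma$ are $\mathrm{sgn}\,x'(a)$ times the signs of explicit functions of $a$ assembled from $f,g,h$ and their derivatives; one then selects the appropriate discrete sign configuration in \eq{shock}, the sign of $x'(a)$, and, if needed, a subinterval on which those explicit functions share a common sign, to obtain $\alpha,\beta\ge0$ on $\Sigma$. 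By Lemma~\ref{RCpsystem} non-compressiveness propagates to all of the isentropic region, so $\alpha,\beta$ remain bounded, no second shock forms, and the isentropic solution is global (approaching vacuum at worst as $t\to\infty$, which is permitted). Taking the trace at $t=0$ --- and completing the isentropic initial data outside the domain of determinacy of $\Sigma$ by any smooth non-compressive state equal to a constant near spatial infinity --- then yields initial data that generate a globally defined, genuinely interacting solution with a single shock, isentropic on one side and stationary on the other.

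I expect the main obstacle to be precisely this last sign verification: one must check, from the explicit forms of $f,g,h$, that some admissible choice of the parameter range and sign conventions makes the Riemann-invariant gradients induced on $\Sigma$ rarefactive, so that the isentropic side is genuinely shock-free for all time. Everything else is formal --- the stationary side is an exact solution needing no estimates, local solvability follows from non-characteristicity, and continuation to all time follows once compressions are excluded --- but absent this sign information one cannot conclude that \emph{any} admissible data produce a truly global single-shock solution rather than one in which a second shock forms in finite time.
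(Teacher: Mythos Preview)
Your scaffolding matches the paper's: fill the stationary side with the explicit solution \eq{station}, pose a non-characteristic Cauchy problem for the $p$-system on the isentropic side, and control gradients there. The divergence is in \emph{how} you control them. The paper derives, just as you anticipate, $r_x=(da/dx)\,\mathcal R(a)$ and $s_x=(da/dx)\,\mathcal S(a)$ for explicit functions $\mathcal R,\mathcal S$ built from $f,g,h$; but instead of chasing signs it restricts $a$ to a compact subinterval of $(1,d^{1/(d-1)})$ (so $\mathcal R,\mathcal S$ are bounded) and then takes $|dx/da|$ large, forcing $r_x,s_x$ to be \emph{small} on $\Sigma$. The quantitative estimate \eq{lax lemma} then yields finite gradient bounds for $t\ge0$ with no sign hypothesis at all. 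The paper also distinguishes two cases---stationary behind versus ahead of the shock---treating the first purely by smallness, and only in the second imposing a one-sided condition $s_x\ge0$ (since there the forward wave genuinely emanates from $\Sigma$) together with a non-focusing requirement on the backward family.

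Your route---force $\alpha,\beta\ge0$ on $\Sigma$ and invoke Lemma~\ref{RCpsystem}---would work if the signs cooperate, but the obstacle you flag is real and not obviously surmountable. Once $U_1,P_1,M_0$ are fixed, $u_0(a),z_0(a)$ and hence $dr_0/da,\,ds_0/da$ are completely determined by \eq{shock}, \eq{CP}; since the factors $1\pm c/\xi$ in \eq{drda}, \eq{dsda} are both positive on the ahead side, your requirement reduces to $dr_0/da$ and $ds_0/da$ having a common sign, i.e.\ $|du_0/da|>M_0\,|dz_0/da|$ on some $a$-interval---a specific inequality on the Hugoniot functions that you have not verified and for which nothing structural guarantees success. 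The paper's smallness device bypasses this entirely (one can always make $|dx/da|$ large), which is why it, not rarefactivity, is the clean lever here. A smaller geometric point: when the isentropic side is \emph{ahead} of the shock, $\Sigma$ is space-like there (Lax gives $|\xi|>c$) and both characteristic families \emph{enter} $\Sigma$ from that side, so $\Sigma$ does not ``shed waves into the isentropic region'' in forward time; the $R/C$-preservation argument would still go through, but your picture of the geometry is inverted for that case.
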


\begin{proof}
Local existence follows because the Cauchy problem is not
characteristic.  Assume the shock is backward, with stationary
solution behind the shock, so $u_1=u_r$ is constant, and the sign
choices in \eq{shock} are all $-$.  It is clear that the stationary
solution behind the shock is globally defined.  To verify existence in
the isentropic region, we must estimate the derivatives of the Riemann
invariants.

To calculate the Riemann invariants, we differentiate the invariant
along $\Sigma$, to get
\beq
  \frac{dr}{da} = r_x\,\frac{dx}{da} + r_t\,\frac{dt}{da}
     = r_x\,\frac{dx}{da}\,\left(1-\frac{c(a)}{\xi(a)}\right),
\label{drda}
\eeq
where we have used $r_t-c\,r_x=0$ and $\frac{dx}{dt}=-\xi$, and
similarly
\beq
  \frac{ds}{da} = s_x\,\frac{dx}{da} + s_t\,\frac{dt}{da}
     = s_x\,\frac{dx}{da}\,\left(1+\frac{c(a)}{\xi(a)}\right).
\label{dsda}
\eeq
We now calculate from \eq{shock} and \eq{taupc} that
\begin{gather*}
  m_1(a) = M_0\,f(a), \quad
  P_1 = \frac{M_0^2}{d+1}\,f(a)^2\,(a\,z_0(a))^{d+1},
  \com{and}\\
  U_1 - u_0(a) = 
     - M_0\,\left[\frac{(d+1)\,P_1}{M_0^2}\right]^{1/(d+1)}
     \,\frac{1+f(a)}2\,h(a).
\end{gather*}
We use these to solve for $z_0(a)$ and $u_0(a)$, and plug in to
\eq{drda} and \eq{dsda} to get
\beq
  r_x = \frac{da}{dx}\,\mathcal R(a) \com{and}  
  s_x = \frac{da}{dx}\,\mathcal S(a),
\label{rxsx}
\eeq
for explicit functions $\mathcal R$ and $\mathcal S$.

We now restrict $a$ to a compact subinterval of $(1,d^{1/(d-1)})$,
which implies uniform bounds on all thermodynamic variables, and so
also $\mathcal R(a)$ and $\mathcal S(a)$, and choose $x(a)$ so that
$r_x$ and $s_x$ are small enough on $\Sigma$ that \eq{lax lemma}
yields finite bounds for $t\ge 0$.

The other case, of stationary solutions before the shock, is similar.
Proceeding as above, we obtain \eq{rxsx} (with slightly different
$\mathcal R$ and $\mathcal S$), and we now must further restrict our
data to ensure that $s_x \ge 0$, so the outgoing forward wave is a
rarefaction, and we must ensure that the backward wave does not focus
in the halfplane $t>0$.
\end{proof}

It is interesting to ask whether the shock persists.  Disappearance of
the backward shock occurs in the limit $a\to1^+$, so we must
demonstrate that this limit cannot occur for finite $(x(a),t(a))$.  It
is well known that in the limit of vanishing wave strength, we have
\[
  1 - \frac{c(a)}{\xi(a)} = O(a-1) \com{as} a\to 1^+,
\]
see~\cite{smoller}.  The functions $\mathcal R(a)$ and $\mathcal S(a)$
remain bounded away from $0$ in the limit $a\to1$, so we obtain
\[
  |r_x| = \frac{O(1)}{a-1}\,\frac{da}{dx},
\]
and in particular, if $r_x$ remains finite, by \eq{drda}, there exist
$\nu>0$ and $\delta>0$ such that
\[
  (a-1)\,\frac{dx}{da} > \nu \com{for} 1<a<1+\delta.
\]
Now, for $0<\epsilon<\delta$, we have
\begin{align*}
  x(1+\delta)-x(1+\epsilon) &= 
  \int_{1+\epsilon}^{1+\delta}\frac{dx}{d\tilde a}\;d\tilde a\\
  &\ge \int_{1+\epsilon}^{1+\delta}\frac{\nu}{\tilde a-1}\;d\tilde a
  \\&= \nu\,\log\frac\delta\epsilon \to \infty,
\end{align*}
as $\epsilon\to0$, so $x\to -\infty$ as $a\to1^+$.  This implies that
the shock strength can vanish only at infinity.

\end{document}